\documentclass[11pt]{article}

\textheight 620pt \textwidth 450pt
\oddsidemargin 2.5mm \evensidemargin 2.5mm
\topmargin 0mm
\usepackage{IEEEtrantools}
\usepackage{latexsym}
\usepackage{amsfonts}
\usepackage{amssymb}
\usepackage{amsmath,amsfonts,amsthm}
\usepackage{mathrsfs}
\usepackage[all]{xy}
\usepackage{epic}
\usepackage{eepic}
\usepackage{enumerate}
\interdisplaylinepenalty=1000
\usepackage{multirow}
\usepackage[breaklinks]{hyperref}
\usepackage{tikz}

\usepackage{mathrsfs}
\usepackage[all]{xy}
\usepackage{epic}
\usepackage{eepic}
\usepackage{enumerate}
\interdisplaylinepenalty=1000
\usepackage{multirow}
\usepackage[breaklinks]{hyperref}
\usepackage{tikz}
\usetikzlibrary{matrix,arrows,decorations.pathmorphing}
\usepackage{color}

\newcommand{\bbbt}{\mathbb{T}}
\newcommand{\scrt}{\mathscr{T}}

\newcommand{\be}{\begin{equation}}
	\newcommand{\ee}{\end{equation}}
\newcommand{\bea}{\begin{eqnarray}}
	\newcommand{\eea}{\end{eqnarray}}
\newcommand{\bean}{\begin{eqnarray*}}
	\newcommand{\eean}{\end{eqnarray*}}
\newcommand{\brray}{\begin{array}}
	\newcommand{\erray}{\end{array}}
\newcommand{\biearray}{\begin{IEEEarray}{rCl}}
	\newcommand{\eiearray}{\end{IEEEarray}}

\newcommand{\newsection}[1]{\setcounter{equation}{0}
	\setcounter{dfn}{0}
	\section{#1}}

\newtheorem{dfn}{Definition}[section]
\newtheorem{thm}[dfn]{Theorem}
\newtheorem{lmma}[dfn]{Lemma}
\newtheorem{ppsn}[dfn]{Proposition}
\newtheorem{crlre}[dfn]{Corollary}
\newtheorem{xmpl}[dfn]{Example}
\newtheorem{rmrk}[dfn]{Remark}

\newcommand{\bdfn}{\begin{dfn}\rm}
	\newcommand{\bthm}{\begin{thm}}
		\newcommand{\blmma}{\begin{lmma}}
			\newcommand{\bppsn}{\begin{ppsn}}
				\newcommand{\bcrlre}{\begin{crlre}}
					\newcommand{\bxmpl}{\begin{xmpl}}
						\newcommand{\brmrk}{\begin{rmrk}\rm}
							
							\newcommand{\edfn}{\end{dfn}}
						\newcommand{\ethm}{\end{thm}}
					\newcommand{\elmma}{\end{lmma}}
				\newcommand{\eppsn}{\end{ppsn}}
			\newcommand{\ecrlre}{\end{crlre}}
		\newcommand{\exmpl}{\end{xmpl}}
	\newcommand{\ermrk}{\end{rmrk}}

\newcommand{\bbc}{\mathbb{C}}

\newcommand{\bbn}{\mathbb{N}}

\newcommand{\prf}{\noindent{\it Proof\/}: }

\def \qed { \mbox{}\hfill
	$\Box$\vspace{1ex}}

\newcommand\restr[2]{{
		\left.\kern-\nulldelimiterspace 
		\littletaller
		\right|_{#2} 
}}

\newcommand{\littletaller}{\mathchoice{\vphantom{\big|}}{}{}{}}

\begin{document}
	
	\author{{\sc Akshay Bhuva, Bipul Saurabh}}
	\title{Computation of Gelfand-Kirillov dimension  for $B$-type structures}
\maketitle

	\begin{abstract}  
		Let $\mathcal{O}(\mbox{Spin}_{q^{1/2}}(2n+1))$ and $\mathcal{O}(SO_q(2n+1))$ be the quantized algebras of regular functions on the Lie groups   $\mbox{Spin}(2n+1)$ and  $SO(2n+1)$, respectively.  In this article, we prove that the Gelfand-Kirillov dimension of a  simple unitarizable $\mathcal{O}(\mbox{Spin}_{q^{1/2}}(2n+1))$-module $V_{t,w}^{\mbox{Spin}}$ is the same as the length of the  Weyl word $w$.  We show that the same result holds for the $\mathcal{O}(SO_q(2n+1))$-module $V_{t,w}$, which is  obtained from $V_{t,w}^{\mbox{Spin}}$ by  restricting the algebra action  to the subalgebra $\mathcal{O}(SO_q(2n+1))$ of $\mathcal{O}(\mbox{Spin}_{q^{1/2}}(2n+1))$. 
		Moreover, we  consider  the quantized algebras of regular functions  on  certain homogeneous spaces of $SO(2n+1)$ and $\mbox{Spin}(2n+1)$  and show that its Gelfand-Kirillov dimension is equal to  the dimension of the homogeneous space as a real differentiable manifold.

	\end{abstract}

	{\bf Keywords.} 
	Weyl group,  Simple unitarizable modules,  Quantized function algebra, Diagram embedding,  Gelfand-Kirillov dimension.

	\section{Introduction}

	The Gelfand-Kirillov dimension (abbreviated as GKdim) is  a useful tool for studying  finitely generated noncommutative algebras and their modules.  This invariant measures the asymptotic  growth rate of an algebra or a module. For  the universal enveloping algebra $U(\mathfrak{g})$ of a finite-dimensional complex Lie algebra $\mathfrak{g}$, it gives the dimension of $\mathfrak{g}$.  One can also consider its dual algebra $\mathcal{O}(G)$: the  algebra  of regular functions   on  a  compact Lie group $G$ generated by matrix co-efficients of finite-dimensional  unitary representations.  Banica and Vergnioux \cite{BanVer-2009aa} proved that for a connected simply connected compact real Lie group $G$, GKdim of the Hopf algebra $\mathcal{O}(G)$   is equal to the manifold dimension of $G$. A  natural question arises: does the same result hold for the $q$-deformation of $G$, and its homogeneous spaces?  For  the  $q$-deformation  $G_q$ of 
	a linearly reductive group $G$,  it turns out to be true because GK dim of a finitely generated cosemisimple Hopf algebra is determined  by its representation ring and the dimension function, which are isomorphic for $G$ and $G_q$ (for detail, see \cite{BanVer-2009aa}, \cite{ChiWalWan-2019aa}). 
	However, this method  does not apply to modules and quantum homogeneous spaces, and one needs to take an entirely different approach to tackle the computation for these structures.  Chakraborty and Saurabh (\cite{ChaSau-2018aa}, \cite{ChaSau-2019aa})  took up the case of $q$-deformation of a classical Lie group of type $A, C$, and $D$ and computed the invariant for its modules and  the function algebra of a large class of homogeneous spaces. In the present article, we exhibit these computations for the case of type $B$-modules and type $B$-homogeneous spaces and thus complete the investigation for classical linear groups.

	Let $G$ denote a   classical  linear group of rank $n$,
	and $\mathfrak{g}$ be its complexified Lie algebra. For $q \in (0,1)$, let $G_q$ be the $q$-deformation of $G$, and $\mathcal{O}(G_q)$ be the algebra of regular functions on $G_q$. Fix a nondegenerate symmetric ad-invariant form $\langle \cdot, \cdot \rangle$ on $\mathfrak{g}$ such that its restriction to the real Lie algebra $g$ of $G$ is negative definite. Let $\Pi:= \{ \alpha_1, \alpha_2, \cdots \alpha_n\}$ be the set of simple roots.  The Weyl group $W_n$ of $G$ can be described as the group generated by the reflections $\{s_i:1\leq i \leq n\}$, where $s_i$ is the reflection defined by the root $\alpha_i$.  
	Given an element $w$ in the Weyl group and an element $t$ in a fixed maximal torus of $G$, Korogodski and Soibelman (\cite{KorSoi-1998aa}) constructed a simple unitarizable $\mathcal{O}(G_q)$-module $V_{t,w}$, and proved that these are precisely all 
	upto equivalence. 
	Chakraborty and Saurabh (\cite{ChaSau-2018aa}) proved that the Gelfand Kirillov dimension of $V_{t,w}$ is equal to length $\ell(w)$ of the element $w$ of $W_n$  if $G$ is of type $A, C$ or $D$.  It is natural to expect that the same result holds for the family of type $B$ quantum groups. The first main result of this paper says that this is indeed the case. 
	\bthm
	Let $w \in W_n$ and $t \in \bbbt^n$. If $V_{t,w}$ is the simple unitarizable left $\mathcal{O}(SO_q(2n+1))$-module associated to $w$ and $t$, then  one has the following: $$\text{GKdim}(V_{t,w})=\ell(w).$$
	\ethm
	\noindent We give a brief sketch of the proof. 
	For $R\subset \Pi$, let $W_{n,R}$   be the subgroup of $W_n$ generated by the simple reflections $s_{\alpha}$ with $\alpha \in R$. Define  $W_n^R$ to be a subset of $W_n$ consisting of
	reflections $w$ such that $w(\alpha)$ is a positive root of $\mathfrak{g}$ for all $\alpha \in R$. 
	Choose $R_k=\{\alpha_j :n-k+1 \leq j \leq n\}$ for $1\leq k \leq n$. One can then write  $w=w_1w_2\cdots w_{n-1}w_n$, where $w_1w_2\cdots w_k \in W_{n,R_k}$ and 
	$w_{k+1}\cdots w_n \in W_n^{R_k}$ for all $1\leq k \leq n$. Next, we attach a diagram $\mathcal{D}_{t,w}$ to each module $V_{t,w}$ in a way such that all the paths from one node to another determine the image of the canonical generators of  $\mathcal{O}(SO_q(2n+1))$.   We define a notion of embeddability of diagrams, and show that  the diagram  $\mathcal{D}_{t,w_1w_2\cdots w_k}$ is embeddable  in the diagram $\mathcal{D}_{t,w}$ for $1\leq k \leq n-1$. This allows us to use the generators of  $\mathcal{O}(SO_q(2i+1))$ for $ i \leq n$ for the computation of the invariant.  Employing this  and step-wise  induction, we prove the claim. The same method works for the quantized  algebra of regular functions defined on certain homogeneous spaces of $SO_q(2n+1)$. More precisely, we have the following result. 
	\begin{thm}
		Let $w_n^{R_k}$ be the longest element of the Weyl group $W^{R_m}_n$ of $SO(2n+1)$. Then we have 
		$$\text{GKdim}~(\mathcal{O}((SO_q(2n+1)/K_q^{R_m,P(R^c_m)}))= 2\ell(w_n^{R_m})+n-m+1=\text{dim}~(SO(2n+1)/K^{R_m,P(R^c_m)}).$$
	\end{thm}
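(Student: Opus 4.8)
The plan is to reduce the statement to the module computation established above ($\text{GKdim}(V_{t,w})=\ell(w)$) by realizing $\mathcal{O}(SO_q(2n+1)/K_q^{R_m,P(R^c_m)})$ as a $\ast$-subalgebra of $\mathcal{O}(SO_q(2n+1))$ and producing a representation whose growth can be read off from the diagram machinery. First I would describe the homogeneous space algebra as the coinvariants $\{a:(\id\otimes\pi)\Delta(a)=a\otimes\one\}$ under the restriction map $\pi$ onto $\mathcal{O}(K_q^{R_m,P(R^c_m)})$, and record a finite generating set built from the relevant matrix coefficients together with the toral generators that survive the coaction. The asserted identity then splits into two essentially independent tasks: the analytic computation that the invariant equals $2\ell(w_n^{R_m})+n-m+1$, and the purely classical equality $2\ell(w_n^{R_m})+n-m+1=\dim(SO(2n+1)/K^{R_m,P(R^c_m)})$. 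The latter is a root-counting exercise (using $\ell(w_n^{R_m})=\ell(w_0)-\ell(w_{0,R_m})$ and the known value $\ell(w_0)=n^2$, together with the rank of the torus transverse to $K^{R_m,P(R^c_m)}$), and I treat it as routine, noting only that the extra $+1$ is a genuine type-$B$ feature coming from the short simple root in $R^c_m$.

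For the main equality I would pass to the irreducible $\ast$-representations attached to the maximal symplectic leaf, which is indexed by the longest element $w_n^{R_m}$ of $W_n^{R_m}$. Fixing a reduced word for $w_n^{R_m}$ and composing the elementary representations exactly as in the construction of $V_{t,w}$, the relevant faithful family lives on $L^2(\bbbt^{\,n-m+1})\otimes\ell^2(\bbn)^{\otimes\ell(w_n^{R_m})}$, where the tensor factors are dictated by the nodes and paths of the diagram $\mathcal{D}_{t,w_n^{R_m}}$: the canonical generators act as weighted unilateral shifts on the $\ell^2(\bbn)$ factors and as commuting unitaries on $L^2(\bbbt^{\,n-m+1})$. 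I would check that this family (the direct integral over the transverse torus $\bbbt^{\,n-m+1}$) is faithful on the subalgebra, so that its Gelfand--Kirillov dimension equals that of the corresponding image algebra of operators.

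The heart is then the growth estimate for this image. Here the crucial point, and the reason for the factor $2$, is the difference between an algebra and a module: for a module one counts the cyclic subspace $A_{\le k}\cdot\xi$, whose dimension grows like $k^{\ell(w_n^{R_m})}$, which is exactly why the first theorem gives $\ell(w)$; but for the algebra itself one counts the operators, and on each $\ell^2(\bbn)$ factor both a shift $S$ and its adjoint $S^{*}$ occur, so the linearly independent monomials $S^{i}(S^{*})^{j}$ already force polynomial growth of degree $2$ per factor. Thus each of the $\ell(w_n^{R_m})$ quantum directions contributes $2$ and each of the $n-m+1$ toral generators contributes $1$, yielding the exponent $2\ell(w_n^{R_m})+n-m+1$. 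For the upper bound I would bound the growth of the full generating set through the paths of $\mathcal{D}_{t,w_n^{R_m}}$, and for the lower bound exhibit an explicit independent family of such monomials; both are organized by the same step-wise induction on the rank used for the module case, invoking the embeddability of $\mathcal{D}_{t,w_1\cdots w_k}$ in $\mathcal{D}_{t,w}$ to descend to the generators of $\mathcal{O}(SO_q(2i+1))$ for $i\le n$.

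The main obstacle I anticipate is the lower bound. One must verify that the doubled family $S^{i}(S^{*})^{j}$ drawn from all $\ell(w_n^{R_m})$ factors, together with the toral monomials, maps to a genuinely linearly independent set of operators of the claimed cardinality, so that the degree is exactly $2\ell(w_n^{R_m})+n-m+1$ and not smaller. The difficulty is that when the homogeneous space generators are expressed through the paths of $\mathcal{D}_{t,w_n^{R_m}}$ they produce off-diagonal lower-order terms coupling the shift factors, and one must show these do not collapse the leading independent directions; isolating the transverse torus of rank $n-m+1$ cleanly from the shift part is the other delicate ingredient. This is precisely where the diagram embeddability and the rank induction do the real work: once independence of the doubled family is secured at each inductive step, the matching upper bound and the classical dimension count combine to give the theorem.
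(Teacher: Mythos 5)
Your proposal follows essentially the same route as the paper: a faithful realization on $c_{00}(\mathbb{Z})^{\otimes(n-m+1)}\otimes c_{00}(\mathbb{N})^{\otimes\ell(w_n^{R_m})}$ yields the upper bound via $\mathrm{GKdim}(\mathcal{P}(C(\mathbb{T})))=1$ and $\mathrm{GKdim}(\mathcal{P}(\scrt))=2$, while the lower bound is obtained by exhibiting linearly independent ``doubled'' monomials in the generators and their adjoints, with independence propagated by the same rank induction and diagram-embedding machinery used for the modules, and the classical dimension follows from root counting. One small quibble: the $+1$ is not a type-$B$ feature of the short simple root but simply the rank $n-m+1=\#R_m^c$ of the transverse torus, which arises because $\#R_m=m-1$ in the paper's indexing.
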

	We prove  analog of these results  for $\mathcal{O}(\mbox{Spin}_{q^{1/2}}(2n+1))$-modules and for quotient algebras of $\mathcal{O}(\mbox{Spin}_{q^{1/2}}(2n+1))$.  One crucial advantage of our approach is that  it is algorithmic in nature, and therefore one can expect the method working for modules of a more general class of quantum groups  and their homogeneous spaces.

	The paper is organised as follows. In the next section, we recall the definition of the quantized algebra of regular functions $\mathcal{O}(SO_q(2n + 1))$ defined on the compact quantum group $SO_q(2n + 1)$ using FRT approach (\cite{KliSch-1997aa}). By applying the pairing between 
	$U_{q^{1/2}}(so(2n + 1))$ and $\mathcal{O}(SO_q(2n + 1))$, we obtain a family of simple unitarizable left $\mathcal{O}(SO_q(2n + 1))$-modules and $\mathcal{O}(\mbox{Spin}_{q^{1/2}}(2n + 1))$-modules. 
	Section $2$ computes the GK dim of such modules. In section $3$, we compute the GK dim of the quantized algebra of regular functions defined on certain quotient spaces of $SO_q(2n + 1)$ and $\mbox{Spin}_{q^{1/2}}(2n+1)$.

	Here we set-up some notations which will be used throughout the paper. The letter $q$ denote a real number in the interval $(0,1)$. Elements of  Weyl groups are called Weyl words. We denote by $\ell(w)$ the length of a Weyl word $w$. The symbol  $\mathbb{T}$ is reserved for the set of complex numbers with modules equal to $1$. 
	The standard linear basis of  $c_{00}(\mathbb{N})$ and $c_{00}(\mathbb{Z})$ is denoted by $\{e_{n}~: n\in\mathbb{N}\}$ and $\{e_{n}~: n\in\mathbb{Z}\}$, respectively.  
	The number operator $e_n\mapsto ne_n$ is denoted by $N$. The letter $S$ is for the left shift operator  $e_n\longmapsto e_{n-1}$. We denote by $\overleftarrow{\prod_{j=1}^n} b_j$ 	 the product $b_nb_{n-1}\cdots b_1$.  For
	$m,n\in \bbn$,   the $q$-binomial coefficient $\binom{n}{m}_q$ is given by  $\frac{[n]_q}{[m]_q [n-m]_q}$, where $[n]_q=\frac{q^n-q^{-n}}{q-q^{-1}}$. We denote by  $C$  a generic nonzero constant. Let $T_1$ and $T_2$ be two endomorphism of $c_{00}(\mathbb{N})^{\otimes k}$ and let  $W$ be a subspace of $c_{00}(\mathbb{N})^{\otimes k}$. We say that $ T_1\sim T_2$ on $W$ if there exits positive integers $n_1,n_2,\cdots,n_k$ such that
$$T_1=CT_2 (q^{n_1N}\otimes q^{{n_2}N}\otimes \cdots\otimes q^{{n_k}N}) $$
on W.

	\newsection{Preliminaries }
	Following \cite{KliSch-1997aa}, we recall the Hopf $\ast$-algebra structure of $\mathcal{O} (SO_q(2n+1))$  and the pairing between $U_{q^{1/2}}(so_{2n+1})$ and $\mathcal{O}(SO_{q}(2n+1))$. 
	\subsection{The Hopf $\ast$-algebra $\mathcal{O} (SO_q(2n+1))$}

	Let us begin by introducing some important  notations:
	\begin{IEEEeqnarray}{rCl} 
		i^{'} & = & 2n+2-i, \nonumber ~~~
		\rho_{i}  =  (2n+1)/2-i \quad \text{ if } i \leq i^{'}, ~~\rho_{i^{'}} = -\rho_{i},  \, \quad 
		D_{j}^{i} =  \delta_{ij} q^{-\rho_{i}}, \nonumber\\
		R_{mn}^{ij}& = &\begin{cases}
			q^{\delta_{ij}-\delta_{ij^{'}}}\delta_{im}\delta_{jn} + (q-q^{-1})(\delta_{jm}\delta_{in}- D_{i}^{j}D_{n}^{m}) & \mbox{ if } i>m, \cr
			q^{\delta_{ij}-\delta_{ij^{'}}}\delta_{im}\delta_{jn}  & \mbox{ if } i\leq m, \cr
		\end{cases}. \nonumber
	\end{IEEEeqnarray}
	Here $\delta_{ij}$ is the Kronecker delta function. Let $A(R)$ be the unital associative algebra generated by  generated by $v_{l}^{k}$, $k,l=1,2,\cdots, 2n+1$ satisfying 
	the following relations:
	\begin{IEEEeqnarray}{rCl} \label{2.1}
		\sum_{k,l=1}^{2n+1}R_{kl}^{ji}v_{s}^{k}v_{t}^{l}-R_{st}^{lk}v_{k}^{i}v_{l}^{j}=0,
		\quad i,j,s,t=1,2,\cdots ,2n+1. \label{relations}
	\end{IEEEeqnarray}
	The matrices $(\!(v_{l}^{k})\!)$
	and $(\!(D_{j}^{i})\!)$ are denoted as $V$ and $D$, respectively. Define $J$ to be the two sided ideal generated by entries of the matrices $VDV^{t}D^{-1}-I$ and $DV^{t}D^{-1}V-I$. Let $\mathcal{O}(SO_{q}(2n+1))$ denote the quotient 
	algebra $A(R)/J$.  The  Hopf $\ast$-algebra structure on  $\mathcal{O}(SO_{q}(2n+1))$ comes from the following maps.
	\begin{align*}
		&\bullet\text{Comultiplication}
		:\Delta(v_{l}^{k}) = \sum_{i=1}^{N}v_{i}^{k} \otimes v_{l}^{i},~~~~~\bullet\text{Counit}:
		\epsilon(v_{l}^{k}) = \delta_{kl}, \\& \bullet\text{Antipode}:
		S(v_{l}^{k}) = \epsilon_{k}\epsilon_{l}q^{\rho_{k}-\rho_{l}}v_{l^{'}}^{k^{'}}, ~~~~~~~~~~~~~~\bullet\text{Involution}:
		(v_{l}^{k})^{*} = \epsilon_{k}\epsilon_{l}q^{\rho_{k}-\rho_{l}}v_{l^{'}}^{k^{'}}.
	\end{align*}

	\subsection{Pairing between $U_{q^{1/2}}(so_{2n+1})$ and $\mathcal{O}(SO_{q}(2n+1))$}  
	We recall from \cite{KliSch-1997aa} the Hopf algebra pairing between $U_{q^{1/2}}(so_{2n+1})$ and $\mathcal{O}(SO_{q}(2n+1))$  and  between $U_q(sl_2)$ and $\mathcal{O}(SL_q(2))$.
	\bthm \cite{KliSch-1997aa} \label{pairing}
	There exists a unique  dual pairing $\left\langle\cdot,\cdot\right\rangle$ between the Hopf algebras $U_q(sl_2)$ and $\mathcal{O}(SL_q(2))$, as well as $U_{q^{1/2}}(so_{2n+1})$ and $\mathcal{O}(SO_{q}(2n+1))$. This pairing  is given by the following equation:
	\[
	\left\langle f,v_{l}^{k}\right\rangle = t_{kl}(f),        \hspace{0.5in} \mbox{for }  \,k,l=1,2,\cdots,2n+1,\\
	\]
	where $t_{kl}$ is the matrix element of $T_{1}$, which corresponds to the vector representation of $U_{q}(sl_{2})$ in first case and $U_{q^{1/2}}(so_{2n+1})$ in second case.  Moreover, the pairing $\left\langle\cdot,\cdot\right\rangle$  is nondegenerate. 
	\ethm
	We will  describe $T_{1}$ for both Hopf $\ast$-algebras. Define $E_{ij}$ as a $2n \times 2n$ 
	matrix with $1$ in the $(i,j)^{th}$ position and $0$ elsewhere, and $D_{j}$ as a diagonal matrix with $q$ in the $(j,j)^{th}$ 
	position and $1$ elsewhere on the diagonal. For the quantum universal enveloping algebra  $U_{q}(so_{2n+1})$, we have
	
	\[ \left.
	\begin{array}{rcl}
		T_1(K_i) &=& D_i^{-1}D_{i+1}D_{2n-i+1}^{-1}D_{2n-i+2}, \nonumber\\
		T_1(E_i) &=& E_{i+1,i}-E_{2n-i+2,2n-i+1},\nonumber\\
		T_1(F_i) &=& E_{i,i+1}-E_{2n-i+1,2n-i+2},\nonumber
	\end{array} \right\}\quad  \mbox{for} \quad i \in \{1,2,\cdots,n-1\},
	\]
	and for $i=n$,
	\[ 
	T_1(K_n) = D_n^{-1}D_{n+2},~~~~
	T_1(E_n) = c(E_{n+1,n}-q^{1/2}E_{n+2,n+1}),~~~~
	T_1(F_n) = c(E_{n,n+1}-q^{-1/2}E_{n+1,n+2}),
	\]
	where $c=(q^{1/2}+q^{-1/2})^{1/2}$.
	For $U_{q}(sl_{2})$, one has  the following:\\
	\begin{IEEEeqnarray}{rCl}
		T_1(K) =   \left({\begin{matrix}
				q^{-1} & 0\\
				0& q\\
		\end{matrix} } \right),\qquad
		T_1(E) =  \left( {\begin{matrix}
				0 & 0\\
				1& 0\\
		\end{matrix} } \right), \qquad
		T_1(F) =  \left( {\begin{matrix}
				0 & 1\\
				0& 0\\
		\end{matrix} } \right).\nonumber
	\end{IEEEeqnarray}
	
	\noindent	\textbf{$U_{q^{1/2}}(so_{2n+1})$-module structure}: The pairing $\left\langle\cdot,\cdot\right\rangle$  induces a $U_{q^{1/2}}(so_{2n+1})$-module structure on  $\mathcal{O}(SO_{q}(2n+1))$ which is as follows:
	\begin{IEEEeqnarray}{rCl} \label{module}
	fv=(1\otimes \left\langle\, f\,,\cdot\,\right\rangle )\Delta(v)= \left\langle f,  v_{(2)}\right\rangle v_{(1)} ; \, \mbox { for } a \in \mathcal{O}(SO_{q}(2n+1)), \,f \in U_{q^{1/2}}(so_{2n+1}),
	\end{IEEEeqnarray}
	where $\Delta(v)=\sum \, v_{(1)} \otimes v_{(2)}$ in Sweedler notation. One can extend the action of $K_i$'s to the maximal torus $\mathbb{T}^n$ as follows:
	Let $V$ be a finite-dimensional admissible $U_{q^{1/2}}(so_{2n+1})$-module and   $\lambda$ be an integral weight.  Denote by $V(\lambda)$ the space of vectors $v\in V$ having weight $\lambda$.  Define an action of the maximal torus $\mathbb{T}^n$  on $V(\lambda)$ as follows:
	$$(t_1,t_2,\cdots ,t_n)v=(\prod_{j=1}^n t_i^{(\lambda ,\alpha)})v.$$
	This action can be extended to $V$ as $V=\oplus_{\lambda } V(\lambda)$.   Let $\mu \in P_{+}$ be a dominant integral weight and $V_{\mu}$ be a simple unitarizable module  $U_{q^{1/2}}(so_{2n+1})$-module with $\mu$  as the highest weight. Let $\left( \cdot, \cdot \right)$ be the inner product on $V_{\mu}$. For $\omega, \upsilon \in V_{\mu}$, denote the  matrix entry $\left( \omega, \odot \upsilon \right)$ by $ C_{\omega, \upsilon}^{\mu}$.  The action of the maximal torus  $\bbbt^n$ on  $C_{\omega, \upsilon}^{\mu}$ of $V_{\mu}$ is given by:
	\begin{IEEEeqnarray}{rCl}\label{torus}
		(t_1,t_2,\cdots ,t_n)C_{\omega, \upsilon}^{\mu}= \left( \omega\, , \,	\odot (t_1,t_2,\cdots ,t_n) \upsilon\right). 
	\end{IEEEeqnarray}

	\newsection{Simple unitarizable modules of  $\mathcal{O}(SO_q(2n+1))$}
	In this section, we will describe all simple unitarizable $\mathcal{O}(SO_{q}(2n+1))$-modules and associate a diagram to each such module.  \\
	
	\noindent\textbf{Elementary representation of $\mathcal{O}(SO_{q}(2n+1))$:} Fix $i \in \{1,2,\cdots,n\}$. Define $q_i=q^{d_i}$, where $d_i=2$ for $i=1,2,\cdots ,n-1$ and $d_n=1$. Let $K$, $F$, and $E$ be the standard generators of $U_{q_i}(sl_2)$.  Let  $\varphi_i : U_{q_i}(sl_2) \longrightarrow U_{q^{1/2}}(so_{2n+1})$ be a homomorphism given on generators by,

	$$\varphi_i(K)=K_i,~~\varphi_i(E)=E_i,~~\varphi_i(F)=F_i .$$
	By duality, we get a surjective homomorphism
	\begin{displaymath}
		\varphi_i^* :\mathcal{O}(SO_{q}(2n+1)) \longrightarrow \mathcal{O}(SU_{q_i}(2))
	\end{displaymath}
	given by
	\begin{displaymath}
		\left\langle f, \varphi_i^*(v_l^k)\right\rangle = \left\langle \varphi_i(f), v_l^k\right\rangle.
	\end{displaymath}
	In particular,
	\begin{IEEEeqnarray}{rCl}
		\left\langle K, \varphi_i^*(v_l^k)\right\rangle = \left\langle K_i, v_l^k\right\rangle, \quad
		\left\langle E, \varphi_i^*(v_l^k)\right\rangle = \left\langle E_i, v_l^k\right\rangle, \quad 
		\left\langle F, \varphi_i^*(v_l^k)\right\rangle = \left\langle F_i, v_l^k\right\rangle. \label{r}
	\end{IEEEeqnarray}
	Let $\left( \begin{smallmatrix}
		\alpha & -q\beta^*\\
		\beta & \alpha^*
	\end{smallmatrix}\right)$ be the fundamental co-representation of $\mathcal{O}(SU_{q_i}(2))$. Note that  $\alpha$ and $\beta$ generate the algebra $\mathcal{O}(SU_{q_i}(2))$.  
	\bthm \label{homomorphism}
	For $i \in \{1,2,\cdots, n-1\}$, one has the following:
	\[
	\phi_i^*(v_l^k)=\begin{cases} 
		\alpha  & \mbox{ if } (k,l)=(i,i) \mbox{ or } (2n-i+1,2n-i+1),\cr
		\alpha^*& \mbox{ if } (k,l)=(i+1,i+1) \mbox{ or } (2n-i+2,2n-i+2),\cr
		-q^2\beta^* & \mbox{ if } (k,l)=(i,i+1),\cr
		\beta & \mbox{ if } (k,l)=(i+1,i),\cr
		-\beta & \mbox{ if } (k,l)=(2n-i+2,2n-i+1),\cr
		q^2\beta^* & \mbox{ if } (k,l)=(2n-i+1,2n-i+2),\cr
		\delta_{kl} & \mbox{ otherwise }. \cr
	\end{cases}
	\]
	For $i=n$, one has  the following:
	\[
	\phi_n^*(v_l^k)=\begin{cases} 
		\alpha^2 & \mbox{ if } (k,l)=(n,n),\cr
		-q\sqrt{1+q^2}\, \beta^*\alpha & \mbox{ if } (k,l)=(n,n+1),\cr
		(q\beta^*)^2& \mbox{ if } (k,l)=(n,n+2), \cr
		\sqrt{1+q^2}\, \beta \alpha & \mbox{ if } (k,l)=(n+1,n),\cr
		1-(1+q^2)\beta^*\beta & \mbox{ if } (k,l)=(n+1,n+1),\cr
		-q\sqrt{(1+q^2)}\,\alpha^*\beta^* & \mbox{ if } (k,l)=(n+1,n+2),\cr
		\beta^2 & \mbox{ if } (k,l)=(n+2,n), \cr	
		\sqrt{(1+q^2)}\,\alpha^* \beta & \mbox{ if } (k,l)=(n+2,n+1),\cr
		(\alpha^*)^2 & \mbox{ if } (k,l)=(n+2,n+2),\cr
		\delta_{kl} & \mbox{ otherwise }. \cr     
	\end{cases}
	\]
	\ethm 
	\prf We will prove the claim for the case of $i=n, k=n$, and $l=n$; the other cases can be shown using a similar argument. For that, take $a,b,c \in \bbn$. A straightforward computation using equation (\ref{r}) shows that 
	\[
	\left\langle K^aF^bE^c, \varphi_n^*(v_n^n)\right\rangle=\left\langle K^aF^bE^c, \alpha^2\right\rangle.
	\]
	Since the pairing is nondegenrate, we get the claim.
	\qed \\
	Let $\pi$ be a representation of $\mathcal{O}(SU_{q}(2))$ on  $c_{00}(\bbn)$ given by;
	\begin{IEEEeqnarray}{rCl}
		\pi(\alpha)=  \sqrt{1-q^{2N+2}}S , \, \quad \pi(\beta)=	q^N. \label{3.3}
	\end{IEEEeqnarray}
	Define $\pi_{s_{i}} = \pi \circ \varphi_{i}^{*}.$ Applying $(\ref{r})$, we have, for $i=1,2,\cdots,n-1$,
	\[
	\pi_{s_{i}}(v_l^k)=\begin{cases}
		\sqrt{1-q^{4N+4}}S & \mbox{ if } (k,l)=(i,i) \mbox{ or } (2n-i+1,2n-i+1),\cr
		S^*\sqrt{1-q^{4N+4}} & \mbox{ if } (k,l)=(i+1,i+1) \mbox{ or } (2n-i+2,2n-i+2),\cr
		-q^{2N+2} & \mbox{ if } (k,l)=(i,i+1),\cr
		q^{2N} & \mbox{ if } (k,l)=(i+1,i),\cr
		-q^{2N} & \mbox{ if } (k,l)=(2n-i+2,2n-i+1),\cr
		q^{2N+2} & \mbox{ if } (k,l)=(2n-i+1,2n-i+2),\cr
		\delta_{kl} & \mbox{ otherwise }. \cr
	\end{cases}
	\]
	For $i=n$,
	\[
	\pi_{s_n}(v_l^k)=\begin{cases}
		\sqrt{(1-q^{2N+2})(1-q^{2N+4})}S^{2} & \mbox{ if } (k,l)=(n,n),\cr
		I-(1+q^2)q^{2N} & \mbox{ if } (k,l)=(n+1,n+1),\cr
		{S^{*}}^{2}\sqrt{(1-q^{2N+4})(1-q^{2N+2})} & \mbox{ if } (k,l)=(n+2,n+2),\cr
		q^{N}\sqrt{(1+q^2)(1-q^{2N+2})}S & \mbox{ if } (k,l)=(n+1,n),\cr
		q^{N+1}\sqrt{(1+q^2)(1-q^{2N+2})}S & \mbox{ if } (k,l)=(n,n+1),\cr
		
		S^{*}\sqrt{(1+q^2)(1-q^{2N+2})}q^{N} & \mbox{ if } (k,l)=(n+2,n+1),\cr
		-S^{*}\sqrt{(1+q^2)(1-q^{2N+2})}q^{N+1} & \mbox{ if } (k,l)=(n+1,n+2),\cr

		q^{2N+2} & \mbox{ if } (k,l)=(n,n+2), \cr
		q^{2N} & \mbox{ if } (k,l)=(n+2,n), \cr	
		\delta_{kl} & \mbox{ otherwise }. \cr     
	\end{cases}
	\]
	\noindent This makes $c_{00}(\bbn)$ a $\mathcal{O}(SO_q(2n+1))$-module, where the action is given by the elementary representation  $\pi_{s_{i}}$. We denote this module by $V_{s_i}$.
	For $t=(t_{1},t_{2},\cdots ,t_{n}) \in \bbbt^{n}$, there is a one dimensional $\mathcal{O}(SO_q(2n+1))$-module $V_{t}$ with action coming from the map  $\tau_t:\mathcal{O}(SO_q(2n+1)) \longrightarrow \bbc $ given by
	\[
	\tau_{t}(v_l^k)=\begin{cases}
		\overline{t_{k}}\delta_{kl} & \mbox{ if } k<n+1,\cr
		\delta_{kl} & \mbox{ if } k=n+1,\cr
		t_{2n+1-k}\delta_{ij} & \mbox{ if } k> n+1.\cr
	\end{cases}
	\]
	Given two actions $\eta$ and $\gamma$ of $\mathcal{O}(SO_q(2n+1))$, define an action  $\eta * \gamma := (\eta \otimes \gamma)\circ \Delta$. For any two $\mathcal{O}(SO_q(2n+1))$-module $V_{\eta}$ and $V_{\gamma}$,  the  $\mathcal{O}(SO_q(2n+1))$-module $V_{\eta}\ast V_{\gamma}$ is defined as the vector space $V_{\eta} \otimes  V_{\gamma}$ with   $\mathcal{O}(SO_q(2n+1))$-action  coming from  $\eta * \gamma$.
	\bdfn Let  $w \in W_n$ and $t \in \bbbt^n$. Let $s_{i_{1}}s_{i_{2}}...s_{i_{k}}$ be a reduced expression for $w$.  
	Then  the isomorphism class of the module $V_t\ast V_{s_{i_1}}\ast V_{s_{i_2}}\ast \cdots \ast V_{s_{i_k}}$  does not depend on   reduced expressions of $w$.  We denote this module by $V_{t,w}$ and  the corresponding action  by $\pi_{t,w}^n$. It is a simple unitarizable module of $\mathcal{O}(SO_q(2n+1))$. If $t=1$, then $V_{1,w}=V_1\ast V_{s_{i_1}}\ast V_{s_{i_2}}\ast \cdots \ast V_{s_{i_k}}= V_{s_{i_1}}\ast V_{s_{i_2}}\ast \cdots \ast V_{s_{i_k}}$. We will omit $1$, and denote it by $V_w$. 
	\edfn
		The matrix entries  $(\!(x_{j}^{i})\!)$ of the representation of  $U_{q^{1/2}}(so_{2n+1})$  with highest weight $(\frac{1}{2}, \frac{1}{2}, \cdots , \frac{1}{2})$ generates the Hopf-$\ast$-algebra $\mathcal{O}(\mbox{Spin}_q(2n+1))$.  The corepresentation of $\mathcal{O}(\mbox{Spin}_q(2n+1))$ corresponding to the highest weight $(1,0,\cdots ,0)$ is the same as $(\!(v_{l}^{k})\!)$ whose entries generates a proper Hopf-$\ast$-subalgebra  $\mathcal{O}(SO_q(2n+1))$. 
	In the same way as above, one can construct a simple unitarizable left $\mathcal{O}(\mbox{Spin}_q(2n+1))$-module 
	$V_{t,w}^{\mbox{Spin}}$ for each $t \in \bbbt^{n}, w \in W_n$.  The  following theorem says that they are all upto isomorphism.

	\bthm \cite{KorSoi-1998aa} \label{2.3} The set
	$$\{V_{t,w}^{\mbox{Spin}}: t \in \bbbt^{n}, w \in W_n \}$$
	forms a complete set of mutually inequivalent simple unitarizable left $\mathcal{O}(\mbox{Spin}_q(2n+1))$-modules.
	\ethm
	
	\brmrk \label{restriction}
	Note that if we  restrict the algebra action to $\mathcal{O}(SO_q(2n+1))$, and  view the module  $V_{t,w}^{\mbox{Spin}}$ as $\mathcal{O}(SO_q(2n+1))$-module then 
	it is same as $V_{t,w}$. 
	\ermrk
	
	\subsection{Decomposition of a Weyl word}
	We recollect from \cite{Hum-1990aa} the following facts about the Weyl group $W_n$ of $\mathfrak{so}(2n+1)$. For a detailed treatment, we refer the reader to \cite{Hum-1990aa}.
	\bppsn $\cite{Hum-1990aa}$\label{3.2}
	Let $W_n$ be the Weyl group of the Lie algebra $\mathfrak{so}(2n+1)$. 
	\begin{enumerate}[(i)]
		\item
		Any element $w \in W_{n}$ has a reduced expression of the form:\, $\psi_{n,k_n}^{(\epsilon_{n})}\psi_{n-1,k_{n-1}}^{(\epsilon_{n-1})}\cdots \psi_{1,k_1}^{(\epsilon_{1})}$,  
		where 
		$\epsilon_{r} \in \left\{0,1,2\right\}$ and $r\leq k_{r}\leq n$ with the convention that,
		\[
		\psi_{r,k_{r}}^{\epsilon}=\begin{cases}
			s_{k_{r}-1}s_{k_{r}-2}...s_{r} & \mbox{ if } \epsilon=1,\cr
			s_{k_{r}}s_{k_{r}+1}\cdots...s_{n-1}s_{n}s_{n-1}\cdots s_{k_{r}}s_{k_{r}-1}\cdots s_{r} & \mbox{ if } \epsilon=2,\cr
			\mbox{ empty string } & \mbox{ if } \epsilon=0,\cr
		\end{cases}
		\]
		\item
		The composition $\psi_{n,n}^2\psi_{n-1,n-1}^2\cdots \psi_{1,1}^2$ is a  reduced expression  for the longest word of $W_n$.
	\end{enumerate}
	\eppsn\label{2.4aa}

	\noindent For $R\subset \Pi$, let $W_{n,R}$  to be the subgroup of $W_n$ generated by the simple reflections $r_{\alpha}$ with $\alpha \in R$.  Let $W_n^R$ be a subset of $W_n$ consisting of
	reflections $w$ such that $w(\alpha)$ is a positive root of $\mathfrak{so}(2n+1)$ for all $\alpha \in R$. Then every element $w$ in $W_n$ can be decomposed uniquely as $w = w^{\prime}w^{\prime \prime}$ with $w^{\prime}\in W_{n,R}$ and $w^{\prime \prime} \in W_n^R$ such that $\ell(w) = \ell(w^{\prime}) + \ell(w^{\prime \prime})$.   For $ 1\leq k \leq n$, define
	$$R_k=\{\alpha_j :n-k+1 \leq j \leq n\}.$$ One can write  $w=w_1w_2\cdots w_{n-1}w_n,$ where $w_1w_2\cdots w_k \in W_{n,R_k}$ and 
	$w_{k+1}\cdots w_n \in W_n^{R_k}$ for all $1\leq k \leq n$. The above proposition says that  $$w_r=\psi_{n-r+1,k_{n-r+1}}^{(\epsilon_{n-r+1})}.$$ We call $w_r$ the $r$-th part of $w$. 
	\brmrk Note that $W_{n,R_k}$ is naturally isomorphic to the Weyl group $W_k$ of $\mathfrak{so}(2k+1)$. With this identification, one can think of the Weyl word $w_1w_2\cdots w_k$ as an element of $W_k$.
	\ermrk
	
	\subsection{Diagram representation }\label{2.4}
	
		In what follows, we will associate a diagram  to each representation of $\mathcal{O}(SO_q(2n+1))$ in the same way  as given in (\cite{ChaPal-2008aa}, \cite{ChaSau-2018aa}).  These diagrams will have $2n+1$ nodes on both sides indexed by $\{1,2,\cdots ,2n+1\}$.  There are certain edges from left to right side nodes,  each representing an endomorphism of the vector space mentioned on the top of the diagram.   
	The following table describes the endomorphism that an edge represents. 
	\begin{center}
		\begin{tabular}{|c c|c c|c c|}
			\hline Arrow type & Operator & Arrow type & Operator & Arrow type & Operator  \\
			
			\hline  \xymatrix@C=20pt@R=15pt{\ar@{-}[r]&\\} & $I$ & {\def\labelstyle{\scriptscriptstyle}\xymatrix@C=20pt@R=15pt{\ar@{.>}[dr]&\\
					&} }&	$q^{N}\sqrt{(1+q^2)(1-q^{2N+2})}S $ & \xymatrix@C=20pt@R=15pt{{}&\\ \ar@{-}[ur]_{+}} &$-q^{2N+2}$\\

			\hline \xymatrix@C=20pt@R=15pt{\ar@{-}[r]_{-}&\\}& $ \sqrt{1-q^{4N+4}}S$ &\xymatrix@C=20pt@R=15pt{\ar@{-}[r]_{--}&\\}  & $\sqrt{(1-q^{2N+2})(1-q^{2N+4})}S^{2} $&{\def\labelstyle{\scriptscriptstyle}\xymatrix@C=20pt@R=15pt{\ar@{-}[dr]_{+}&\\
					&} }& $q^{2N}$\\

			\hline  \xymatrix@C=20pt@R=15pt{\ar@{--}[r]&\\}   & $I-(1+q^2)q^{2N}$ &\xymatrix@C=20pt@R=15pt{\ar@{-}[r]^{++}&\\}	&${S^{*}}^{2}\sqrt{(1-q^{2N+4})(1-q^{2N+2})}$&{\def\labelstyle{\scriptscriptstyle}\xymatrix@C=20pt@R=15pt{\ar@{-}[dr]_{-}&\\
					&} }&$-q^{2N}$ \\

			\hline \xymatrix@C=20pt@R=15pt{{}&\\ \ar@{-}[ur]_{-}}  & $q^{2N+2}$ & \xymatrix@C=20pt@R=15pt{{}&\\ \ar@{-->}[ur]}   & $	q^{N+1}\sqrt{(1+q^2)(1-q^{2N+2})}S$& &\\

			\hline \xymatrix@C=20pt@R=15pt{\ar@{-}[r]^{+}&\\} & $S^* \sqrt{1-q^{4 N+4}}$& {\def\labelstyle{\scriptscriptstyle}\xymatrix@C=20pt@R=15pt{\ar@{->>}[dr]&\\
					&} }   & 	$S^{*}\sqrt{(1+q^2)(1-q^{2N+2})}q^{N}$
			& {\def\labelstyle{\scriptscriptstyle}\xymatrix@C=20pt@R=15pt{\ar@{-}[dr]&\\
					&} }   & $q^{2N}$ \\

			\hline  \xymatrix@C=20pt@R=15pt{{}&\\ \ar@{~>}[ur]} & $q^{2N+2}$& \xymatrix@C=20pt@R=15pt{{}&\\ \ar@{=}[ur]}  & $-S^{*}\sqrt{q(1+q)(1-q^{2N+2})}q^{N}$& \xymatrix@C=20pt@R=15pt{\ar@{-}[r]^{t}&\\}& $M_t$\\ \hline
		\end{tabular}
	\end{center}

	Let us first consider the diagram $D_{s_i}$ for the elementary representation $\pi_{s_i}$. 
	Here the underlying vector space is $c_{00}(\bbn)$. The endomorphism  represented by  the edge from the left node $l$ to the right node $m$ gives the image of  $v_m^l$ under the map $\pi_{s_i}$.  If there are no edge, then $\pi_{s_i}(v_m^l)=0$. 
	Below are the diagrams for each elementary representation.  For one dimensional representation $\tau_t,\,  t \in \bbbt^n$, the same description applies, except the fact that the underlying vector space is $\bbc$.   
	One can see that $\pi_{s_i}(v_1^1)$ is the identity operator $I$, $\pi_{s_i}(v_1^3)$ equals zero operator, and $\pi_{s_i}(v_i^{i+1})$ is  $q^{N}$ if $i> 1$.

	\begin{tabular}{p{150pt}p{150pt}p{150pt}}
		
		\begin{tikzpicture}[scale=1.0, shift={(0,2)}]
			
			\draw [-] (0,6) -- (1,6);
			\draw [densely dotted] (0,5) -- (1,5);
			\draw [-] (0,4) -- (1,4);
			\draw [-] (0,4) -- (1,3);
			\draw [-] (0,3) -- (1,3);
			\draw [-] (0,3) -- (1,4);
			\draw [densely dotted] (0,2.5) -- (1,2.5);
			\draw [-] (0,2) -- (1,2);
			\draw [-] (0,2) -- (1,1);
			\draw [-] (0,1) -- (1,1);
			\draw [-] (0,1) -- (1,2);
			\draw [densely dotted] (0,0.5) -- (1,0.5);
			\draw [-] (0,0) -- (1,0);
			\node at (.5,4.3) {+};
			\node at (.9,3.7)   {$-$};
			\node at (.9,3.3)   {$-$};
			\node at (.5,2.8) {$-$};
			\node at (.5,2.2) {+};
			\node at (.5,0.79) {$-$};
			\node at (1,1.7) {+};
			\node at (1,1.3) {+};
			\node at (.5,7){$c_{00}(\bbn)$};
			\node at (-.7,6){$2n+1~\bullet$};
			\node at (1.7,6){$\bullet ~2n+1$};
			\node at (-1,4){$2n-i+2~\bullet$};
			\node at (2,4){$\bullet ~2n-i+2$};
			\node at (-.99,3.05){$2n-i+1~\bullet$};
			\node at (1.99,3.05){$\bullet ~2n-i+1$};
			\node at (-.5,2.05){$i+1~\bullet$};
			\node at (1.5,2.05){$\bullet ~i+1$};
			\node at (-.2,1.00){$i~\bullet$};
			\node at (1.3,1.00){$\bullet ~i$};
			\node at (-.2,0){$1~\bullet$};
			\node at (1.3,0){$\bullet ~1$};
			\node at (.5,-1){\text Diagram 1: $\pi_{s_i}, i\neq n$};
		\end{tikzpicture}
		&
		\begin{tikzpicture}[scale=1.0, shift={(0,2)}]
			\draw [-] (0,7) -- (1.5,7);
			\draw [densely dotted] (0,6) -- (1.5,6);
			\draw [-] (0,5) -- (1.5,5);
			\draw [-,double] (0,4) -- (1.5,4.95);
			\draw [->,decorate,decoration={coil,aspect=0}] (0,3) -- (1.5,4.89);
			\draw [->>] (0,5) -- (1.5,4.1);
			\draw [dashed] (0,4) -- (1.5,4);
			\draw [->,dotted] (0,4) -- (1.5,3.1);
			\draw [->,dashed] (0,3) -- (1.5,3.9);
			\draw [-] (0,3) -- (1.5,3);
			\draw [densely dotted] (0,2) -- (1.5,2);
			\draw [-] (0,1) -- (1.6,1);
			\draw [-] (0,5) -- (1.5,3);
			\node at (0.7,5.3) {$++$};
			\node at (.7,2.7) {$-~-$};
			\node at (.8,8){$c_{00}(\bbn)$};
			\node at (-.6,7){$2n+1~\bullet$};
			\node at (2.2,7){$\bullet ~2n+1$};
			\node at (-.6,5){$n+2~\bullet$};
			\node at (2.2,5){$\bullet ~n+2$};
			\node at (-.6,4){$n+1~\bullet$};
			\node at (2.2,4){$\bullet~ n+1$};
			\node at (-0.1,3){$n\bullet ~$};
			\node at (1.85,3){$\bullet~ n$};
			\node at (0,1.05){$1\bullet ~$};
			\node at (1.7,1.05){$~\bullet1$};
			\node at (.5,0){\text Diagram 2: $\pi_{s_n}$};
		\end{tikzpicture}
		&
		\begin{tikzpicture}[scale=1.0, shift={(0,2)}]
			\draw [-] (0,5) -- (1,5);
			\draw [densely dotted] (0,4.4) -- (1,4.4);
			\draw [-] (0,3.7) -- (1,3.7);
			\draw [-] (0,3) -- (1,3);
			\draw [-] (0,2.5) -- (1,2.5);
			\draw [-] (0,2) -- (1,2);
			\draw [densely dotted] (0,1.5) -- (1,1.5);
			\draw [-] (0,1) -- (1,1);
			\node at (.5,6.2){$\bbc$};
			\node at (-.6,5.00){$2n+1~\bullet~~~$};
			\node at (1.6,5.00){$~~~\bullet 2n+1~$};
			\node at (-.45,3.75){$n+2~\bullet~~~$};
			\node at (1.47,3.75){$~~\bullet n+2$};
			\node at (-.45,3.05){$n+1~\bullet~~~$};
			\node at (1.47,3.05){$~~~\bullet n+1~$};
			\node at (-.1,2.55){$n~\bullet~~~$};
			\node at (1.1,2.55){$~~~\bullet n~$};
			\node at (-.45,2.05){$n-1~\bullet~~~$};
			\node at (1.47,2.05){$~~~\bullet n-1~$};
			\node at (-.1,1.05){$1~\bullet~~~$};
			\node at (1.1,1.05){$~~~\bullet 1~$};
			\node at (.5,5.2){$t_1$};
			\node at (.5,3.9){$t_{n}$};
			\node at (.5,3.2){$1$};
			\node at (.5,2.7){$\overline{t_n}$};
			\node at (.5,2.2){$\overline{t_{n-1}}$};
			\node at (.5,1.2){$\overline{t_1}$};
			\node at (.5,0){Diagram 3:$~\tau_t.$};
		\end{tikzpicture}
	\end{tabular}\\[1ex]	\\

	Now, suppose $\eta$ and $\gamma$ are two representations of $\mathcal{O}(SO_q(2n+1))$
	acting on the vector spaces $V_1$ and $V_2$, respectively. The diagram for $\eta \ast \gamma$ is a concatenation of  the diagram $D_{\eta}$ for $\eta$ and $D_{\gamma}$ for $\gamma$. Place the diagram  and $D_{\gamma}$  adjacent to each other, and  identify  the $i$-th  node on the right side of $D_{\eta}$ with   the $i$-th node on the left side of  $D_{\gamma}$.  Each path from one node on the left to another node on the left represents an endomorphism of $V_1\otimes V_2$. Now, $\eta \ast \gamma(v_k^l)$ will be the sum of the endomorphisms represented by each path from the left node $l$ to the right node $k$. If there is no path then it will be zero.\\
	
	To illustrate by an example, see below the  diagram $D_{w}$  for  the representation $\pi_{\omega}$ of $\mathcal{O}(SO_{q}(7))$, where $w =s_1s_2s_3s_2s_1$. Here 
	\begin{align*}
		\pi_{\omega}(v_3^1)= &~q^{2N+2}\otimes q^{2N+2}\otimes  \sqrt{(1-q^{2N+2})(1-q^{2N+4})}S^{2} \otimes S^{*}\sqrt{1-q^{4N+4}}\otimes I -\\
		&~q^{2N+2}\otimes\sqrt{1-q^{4N+4}}S\otimes I\otimes S^{*}\sqrt{1-q^{4N+4}}\otimes I.  \\ \quad
		\pi_{\omega}(v_4^4)= &~I\otimes I\otimes (I-(1+q^2)q^{2N})\otimes I \otimes I.
	\end{align*}
	\pagebreak
	\begin{figure}[h] 
		\def\labelstyle{\scriptstyle} 
		\xymatrix@C=20pt@R=25pt{
			& & & &  &{}\ar@{}[r]_{c_{00}(\mathbb{N})~\otimes}
			& {}\ar@{}[r]_{~~~c_{00}(\mathbb{N})~\otimes}& {}\ar@{}[rr]_{~~c_{00}(\mathbb{N})~~~\otimes}&& {}\ar@{}[r]_{c_{00}(\mathbb{N})~~\otimes}
			& {}\ar@{}[r]_{~~c_{00}(\mathbb{N})}&& \\
			& & & & 
			&\circ\ar@{-}[r]^{+}\ar@{-}[rd]^{\;\;-}
			&\circ\ar@{-}[r]&\circ\ar@{-}[rr]&&\circ\ar@{-}[r]
			&\circ\ar@{-}[r]^{+}\ar@{-}[rd]^{\;\;-}&\circ\\                                                   
			& & & & 
			&\circ\ar@{-}[r]_{-}\ar@{-}[ru]_{\;\;-}
			&\circ\ar@{-}[r]^{+}\ar@{-}[rd]^{\;\;-}
			&\circ\ar@{-}[rr]&&\circ\ar@{-}[r]^{+}\ar@{-}[rd]^{\;\;-}
			&\circ\ar@{-}[r]_{-}\ar@{-}[ru]_{\;\;-}&\circ&\\
			& & & & 
			&\circ\ar@{-}[r]&\circ\ar@{-}[r]_{-}\ar@{-}[ru]_{\;\;-}
			&\circ\ar@{-}[rr]^{++}\ar@{->>}[drr]\ar@{-}[ddrr]
			&&\circ\ar@{-}[r]_{-}\ar@{-}[ru]_{\;\;-}
			&\circ\ar@{-}[r]&\circ&\\
			& & & & 
			&\circ\ar@{-}[r]&\circ\ar@{-}[r]
			&\circ\ar@{--}[rr]\ar@{=}[urr]\ar@{..>}[drr]&
			&\circ\ar@{-}[r] &\circ\ar@{-}[r]&\circ&\\
			& & & & 
			&\circ\ar@{-}[r]
			&\circ\ar@{-}[r]\ar@{-}[r]^{+}\ar@{-}[rd]^{\;\;+}&\circ \ar@{-}[rr]_{--}\ar@{-->}[urr]\ar@{~>}[uurr]  &&\circ\ar@{-}[r]^{+}\ar@{-}[rd]^{\;\;+}
			&\circ\ar@{-}[r]&\circ&\\
			& & & & 
			&\circ\ar@{-}[r]^{+}\ar@{-}[rd]^{\;\;+}&\circ\ar@{-}[r]_{-}\ar@{-}[ru]_{\;\;+}
			&\circ\ar@{-}[rr]&&\circ\ar@{-}[r]_{-}\ar@{-}[ru]_{\;\;+}
			&\circ\ar@{-}[r]^{+}\ar@{-}[rd]^{\;\;+}&\circ&\\
			& & & & 
			&\circ\ar@{-}[r]_{-}\ar@{-}[ru]_{\;\;+}
			&\circ\ar@{-}[r]&\circ\ar@{-}[rr]&&\circ\ar@{-}[r]
			&\circ\ar@{-}[r]_{-}\ar@{-}[ru]_{\;\;+}&\circ&\\   
		}
		\caption{Diagram of $\pi_{\omega}$}\label{diagram1}      \end{figure}
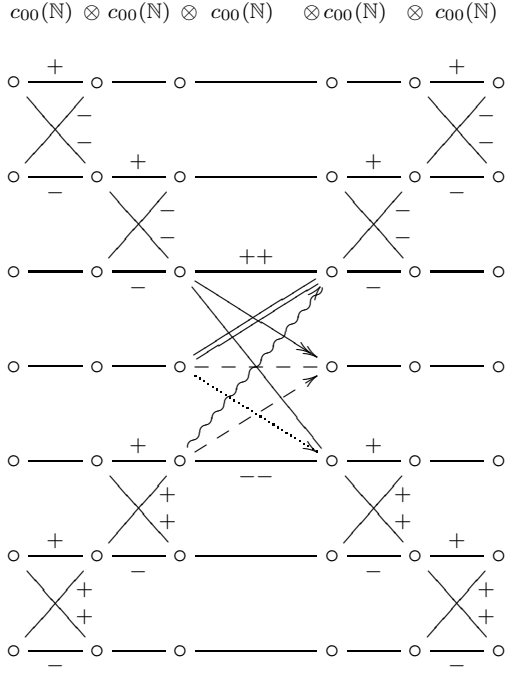

	\bdfn \label{definition of diagram embedding} \textbf{Diagram Embedding }:
	For $1\leq k \leq n-1 $, define 
	$L_n^k:=n-k+1$ and  $M_n^k:=n+k+1$.  Let $w \in W_n$. One can write   $w=w_1w_2\cdots w_n$.   
	We say that the diagram $D_{w_1w_2\cdots w_k}$ is embeddable in the diagram  $D_{w_1w_2\cdots w_{k+l}}$ if there exists a map 
	\[
	\lambda_k^{k+l}: \{L_n^k,L_n^k+1, \cdots ,M_n^k\} \longrightarrow  \{L_n^{k+l},L_n^{k+l}+1, \cdots ,M_n^{k+l}\} 
	\]
	such that 
	\begin{enumerate}[(i)]
		\item
		$\pi^{n}_{w_{k+1}\cdots w_l}(v_{\lambda_k^{k+l}(i)}^i)(e_{0}\otimes e_{0}\otimes\cdot\cdot\cdot\otimes e_{0})=C(e_{0}\otimes e_{0}\otimes\cdot\cdot\cdot\otimes e_{0}),$  for $i\in  \{L_n^k, \cdots, M_n^k\},$
		\item
		$\pi^{n}_{w_{k+1}\cdots w_l}(v_{\lambda_k^{k+l}(i)}^j)(e_{0}\otimes e_{0}\otimes\cdot\cdot\cdot\otimes e_{0})=0, ~~$ for $j\in  \{L_n^k,L_n^k+1, \cdots, M_n^k\}\setminus\{i\},$
		\item
		$\pi^{n}_{w_{k+1}\cdots w_l}((v_{\lambda_k^{k+l}(i)}^i)^*)(e_{0}\otimes e_{0}\otimes\cdot\cdot\cdot\otimes e_{0})=C(e_{0}\otimes\cdot\cdot\cdot\otimes e_{0}),$  for $i\in  \{L_n^k, \cdots, M_n^k\},$
		\item
		$\pi^{n}_{w_{k+1}\cdots w_l}((v_{\lambda_k^{k+l}(i)}^j)^*)(e_{0}\otimes e_{0}\otimes\cdot\cdot\cdot\otimes e_{0})=0~~$ for $j\in  \{L_n^k,L_n^k+1, \cdots ,M_n^k\}\setminus\{i\}$.
	\end{enumerate}
	We denote it by $D_{w_1w_2\cdots w_k}\hookrightarrow D_{w_1w_2\cdots w_{k+l}}$. The map 
	$\lambda_k^{k+l}$ is called the embedding map for $D_{w_1w_2\cdots w_k}\hookrightarrow D_{w_1w_2\cdots w_{k+l}}$. 
	\edfn
	\brmrk One can define the above notion for the diagram of irreducible representations of $\mathcal{O}(G_q)$ by replacing a proper analogue of $L_n^k$ and $M_n^k$.  We will simply say that $G_q$ has diagram embeddable property if $D_{w_1w_2\cdots w_k}\hookrightarrow D_{w_1w_2\cdots w_{k+l}}$ for any Weyl word $w$.
	\ermrk

	\bppsn \label{Diagram embedding} Let $1 \leq k,l \leq n$ such that $k+l \leq n$. 
	Assume that 
	$$D_{w_1w_2\cdots w_{k}} \hookrightarrow D_{w_1w_2\cdots w_{k+1}}\hookrightarrow \cdots D_{w_1w_2\cdots w_{k+l}}.$$
	Then  one has 
	$$D_{w_1w_2\cdots w_k}\hookrightarrow D_{w_1w_2\cdots w_{k+l}}.$$
	\eppsn 
	\prf  It is enough to show the claim for $l=2$. For higher values of $l$, the same procedure will work.  Let $\lambda_k^{k+1}$ and $\lambda_{k+1}^{k+2}$ be the embedding  maps for $D_{w_1\cdots w_{k}} \hookrightarrow D_{w_1\cdots w_{k+1}}$ and $D_{w_1\cdots w_{k+1}} \hookrightarrow D_{w_1w_2\cdots w_{k+2}}$, respectively. We will show that  $ \lambda_{k+1}^{k+2} \circ \lambda_k^{k+1}$ is an embedding map for $D_{w_1w_2\cdots w_{k}} \hookrightarrow D_{w_1w_2\cdots w_{k+2}}$.  For that,  fix $ i \in \{ L_n^k,L_n^{k}+1, \cdots, M_n^k\}$. Observe from the diagram representation that for $ j \notin \{ L_n^{k+1},L_n^{k+1}+1, \cdots, M_n^{k+1}\}$, one has 
	$$\pi^{n}_{w_{k+1}}(v_{j}^i)=0.$$ 
	Using the facts  that $D_{w_1w_2\cdots w_{k}} \hookrightarrow D_{w_1w_2\cdots w_{k+1}}$ and $D_{w_1w_2\cdots w_{k+1}}\hookrightarrow  D_{w_1w_2\cdots w_{k+2}}$,  we get
	\begin{IEEEeqnarray*}{rCl}
		\pi^{n}_{w_{k+1} w_{k+2}}(v_{\lambda_{k+1}^{k+2} \circ \lambda_k^{k+l}(i)}^i)(e_{0}\otimes \cdot\cdot\cdot\otimes e_{0})&=& 	(\pi^{n}_{w_{k+1}} \otimes \pi^{n}_{w_{k+2}}) \Delta(v_{\lambda_{k+1}^{k+2} \circ \lambda_k^{k+l}(i)}^i)(e_{0}\otimes \cdot\cdot\cdot\otimes e_{0})\\
		&=&\sum_{j=1}^{2n+1}\pi^{n}_{w_{k+1}}(v_{j}^i) \otimes  \pi^{n}_{w_{k+2}}(v_{\lambda_{k+1}^{k+2} \circ \lambda_k^{k+l}(i)}^j) (e_{0}\otimes \cdots \otimes e_{0})\\
		&=&\sum_{j=L_n^{k+1}}^{M_n^{k+1}}\pi^{n}_{w_{k+1}}(v_{j}^i) \otimes  \pi^{n}_{w_{k+2}}(v_{\lambda_{k+1}^{k+2} \circ \lambda_k^{k+l}(i)}^j) (e_{0}\otimes \cdots \otimes e_{0})\\
		&=& \pi^{n}_{w_{k+1}}(v_{\lambda_{k}^{k+1}(i) }^i)  \otimes  \pi^{n}_{w_{k+2}}(v_{\lambda_{k+1}^{k+2} \circ \lambda_k^{k+l}(i)}^{\lambda_{k+1}^{k+2}(i)}) (e_{0}\otimes \cdots \otimes e_{0})\\
		&=& C (e_{0}\otimes \cdots \otimes e_{0}).
	\end{IEEEeqnarray*}
	The other conditions can be verified in a similar way. 
	\qed 
	\brmrk 
	Diagram embeddability is implicitly but crucially used in (\cite{ChaSau-2018aa}, \cite{ChaSau-2019aa}), where the authors proved this property for the $A$, $C$, and $D$ type diagrams.  Here we  formulate this notion and prove it for type $B$ diagrams. 
	\ermrk
	
	\section{GKdim of $\mathcal{O}(SO_q(2n+1))$-modules }
	
	In this section, we compute  GKdim of a simple unitarizable $\mathcal{O}(SO_q(2n+1))$-module $V_{t,w}$ for  $t \in \bbbt^n$ and $w \in W_n$. 
	
	\bdfn \label{3.1.}   
	Let $A$ be a finitely generated unital associative algebra, and let $N$ be a finitely generated left $A$-module.  Let $\Xi$ be the set of all finite subsets of $A$ that contains $1$, and generates the algebra $A$.  Let $\Sigma$ be the set of all finite-dimensional subspaces of $N$ that generates $N$ as a left $A$-module. The Gelfand-Kirillov  dimension of $N$ is defined as follows:
	\[GKdim(N) = \sup_{\xi\in \Xi,\, W \in \Sigma} \limsup_{r\to\infty} \frac{\ln(\dim(\xi^{r}W))}{\ln\,r}.\]
	In fact,  if $\xi \in \Xi$ and $W \in \Sigma$, then one can show that 
	\[
	GKdim(N) = \limsup_{r\to \infty} \frac{\ln(\dim(\xi^{r}W))}{\ln\,r}. 
	\]
	\edfn
	
	Throughout this section, our choice for  $\xi$ will be $\{v^{k}_{l}: 1\leq k,l\leq 2n+1\}\cup \{1\}$, and for $W$, it will be  the one dimensional subspace spanned by the vector $e_0\otimes e_0\otimes \cdots \otimes e_0$.   In the following proposition, we obtain an  upper bound of the GKdim of a simple unitarizable left $\mathcal{O}(SO_{q}(2n+1))$-module.
	
	\begin{ppsn}\label{3.1}
		For $w\in W_{n}$ and $t\in \mathbb{T}^{n}$, let $V_{t,w}$ be the associated simple unitarizable left $\mathcal{O}(SO_{q}(2n+1))$-module.   Then we have,
		$$GKdim(V_{t,w})\leq l(w).$$
	\end{ppsn}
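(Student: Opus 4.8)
The plan is to use the explicit elementary representations together with the tensor-product (coproduct) form of the action to control how fast basis vectors can spread under repeated application of the generators, and then simply count the reachable basis vectors. Write $p=\ell(w)$ and fix a reduced expression $w=s_{i_1}s_{i_2}\cdots s_{i_p}$, so that $V_{t,w}$ has underlying space $\bbc\otimes c_{00}(\bbn)^{\otimes p}\cong c_{00}(\bbn)^{\otimes p}$, with basis $\{e_{\bldm}:=e_{m_1}\otimes\cdots\otimes e_{m_p}:\bldm\in\bbn^{p}\}$ and cyclic vector $e_0\otimes\cdots\otimes e_0$ spanning $W$. Since $\xi=\{v_l^k\}\cup\{1\}$ generates the algebra and $W$ generates the module, Definition \ref{3.1.} allows us to evaluate $GKdim(V_{t,w})$ as $\limsup_{r\to\infty}\ln(\dim(\xi^rW))/\ln r$ with precisely these choices.

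First I would record the bounded-displacement property of a single generator. By the module action formula (\ref{module}),
$$\pi_{t,w}^n(v_l^k)=\big(\tau_t\otimes\pi_{s_{i_1}}\otimes\cdots\otimes\pi_{s_{i_p}}\big)\,\Delta^{(p)}(v_l^k),\qquad \Delta^{(p)}(v_l^k)=\sum_{j_1,\dots,j_{p}=1}^{2n+1}v_{j_1}^k\otimes v_{j_2}^{j_1}\otimes\cdots\otimes v_l^{j_{p}}.$$
Each tensor factor of each summand is either $\tau_t$ on the zeroth slot (a scalar) or an operator $\pi_{s_{i_a}}(v_{j_a}^{j_{a-1}})$, which by the explicit formulas for $\pi_{s_i}$ is either $0$ or one of the entries listed in the operator table of Subsection \ref{2.4}. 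Every such entry is a scalar function of $N$ times one of $I,S,S^*,S^2,(S^*)^2$, so it sends a basis vector $e_m$ to a scalar multiple of a single basis vector $e_{m'}$ with $|m'-m|\le 2$. Hence $\pi_{t,w}^n(v_l^k)$ carries each $e_{\bldm}$ into $\mathrm{span}\{e_{\bldm'}:|m'_a-m_a|\le 2\ \text{for all}\ 1\le a\le p\}$.

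Next I would iterate this bound. Applying a product of $r$ generators to $e_0\otimes\cdots\otimes e_0$, each coordinate of the index moves by at most $2$ at each step and stays nonnegative, so
$$\pi_{t,w}^n(\xi^r)\,W\ \subseteq\ \mathrm{span}\{e_{\bldm}:0\le m_a\le 2r\ \text{for all}\ a\},$$
a space of dimension at most $(2r+1)^{p}$. Therefore $\dim(\xi^rW)\le(2r+1)^{\ell(w)}$, whence
$$\frac{\ln\dim(\xi^rW)}{\ln r}\ \le\ \frac{\ell(w)\,\ln(2r+1)}{\ln r},$$
and the right-hand side converges to $\ell(w)$ as $r\to\infty$. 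This yields $GKdim(V_{t,w})\le\ell(w)$.

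The only step carrying real content, and the one to handle with care, is the displacement bound: one must verify case by case that every nonzero entry $\pi_{s_i}(v_b^a)$ in the table changes an index by at most $2$ — in particular the degree-two shifts $S^2$ and $(S^*)^2$ arising from $\pi_{s_n}$, which are exactly the ones that could a priori spoil a naive bound of $1$ — and that $\tau_t$ contributes no displacement on the one-dimensional slot. Everything after that is routine bookkeeping: the iterated coproduct is a finite sum, so no displacement beyond the per-factor bound is introduced, and the polynomial count $(2r+1)^{\ell(w)}$ delivers the stated inequality.
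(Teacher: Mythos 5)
Your proof is correct and follows essentially the same route as the paper: both arguments bound $\dim(\xi^r W)$ by showing that each generator shifts every tensor index by a uniformly bounded amount, so that $\pi_{t,w}^n(\xi^r)W$ lies in a span of $O(r^{\ell(w)})$ basis vectors. The paper phrases the bounded-displacement step via the maximal exponent of $\phi_i^*(v)$ in the standard basis of $\mathcal{O}(SU_{q_i}(2))$, whereas you read the bound (at most $2$, coming from $S^2$ and $(S^*)^2$ in $\pi_{s_n}$) directly off the operator table, but the counting argument is the same.
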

	\begin{proof}
		Recall first from (Proposition 4, page 100,\cite{KliSch-1997aa}) that 
		$$
		\{\alpha^a(\beta^*)^b\beta^c,\, (\alpha^*)^d(\beta^*)^e\beta^f \mid a,b,c,d,e,f \in \mathbb{N} \}$$
		is a linear basis of the algebra $\mathcal{O}(SU_q(2))$. 
		Now, for the monomials $(v_{1,\mathfrak{sl}_2}^1)^a(v_{1,\mathfrak{sl}_2}^2)^b(v_{2,\mathfrak{sl}_2}^1)^c$ and $(v_{2,\mathfrak{sl}_2}^2)^d(v_{1,\mathfrak{sl}_2}^2)^e(v_{2,\mathfrak{sl}_2}^1)^f$,  define their exponents to be $a$ and $d$, respectively.   The exponent of a linear combination of these basis elements is defined to be the maximum of the exponents of monomials appearing in the linear combination. Define
		\[
		M=\max_{\{v\in \xi,\,i\in \{1,2,\cdots , n\}\}}\{\text{exponent of~} \phi_{i}^* (v):v\in\xi \}.
		\]
		Using Theorem \ref{homomorphism}, one can see that the linear span of $\{\pi_{t,w}^n(\xi^r)(e_0\otimes \cdots\otimes e_0)\}$ is contained in the span of $\{e_{i_1}\otimes\cdots\otimes e_{i_{l(w)}} :0\leq  i_j\leq rM, 1\leq j \leq n\}$.
		Using this, we get
		\[
		\text{GKdim}(V_{t,w})=\limsup_{r \rightarrow \infty} \frac{\ln \text{dim}(\xi^{r}(e_0\otimes e_0\cdots\otimes e_0))}{\ln Mr}\leq \limsup_{r \rightarrow \infty} \frac{\ln (Mr)^{\ell(w)}}{\ln r}=\ell(w).
		\]
		
	\end{proof}
	\blmma \label{4.2a}
	Let  $w$ be a Weyl word with  $\ell(w_n)=r$.    Then there exist polynomials $P_{1}^{(w,n)},P_{2}^{(w,n)},\\\cdots ,P_{r}^{(w,n)}$ with non-commutating variables $\pi_{w}^n(v^{2n+1}_{j})$'s and a permutation $\sigma$ of $\{1,2,...,r\}$ such that for all $v\in c_{00}(\mathbb{N})^{\otimes \sum_{i=1}^{n-1}\ell(w_i)}$ and $z_1^n,z_2^n,...,z_{r}^n\in\mathbb{N},$ one has
	$$(P_1^{(w,n)})^{z_{\sigma(1)}^{n}}(P_2^{(w,n)})^{z_{\sigma(2)}^{n}}\cdots (P_1^{(w,n)})^{z_{\sigma(r)}^{n}}(v\otimes e_{0}\otimes e_{0}\otimes\cdot\cdot\cdot\otimes e_{0})=C\,v\otimes e_{z_{1}^n}\otimes e_{z_{2}^n}\otimes\cdot\cdot\cdot\otimes e_{z_r^n},$$
	where $C$ is a nonzero real number.
	\elmma 
	\begin{proof}
		We divide the proof into two cases. \\
		\textbf{ Case 1}:  $r> n$. In this case, we have  $w=w_1w_2\cdot\cdot\cdot w_{n-1}s_1s_2\cdot\cdot\cdot s_n s_{n-1}\cdot\cdot\cdot s_{2n-r}$.  Define  
		\[
		P_j^{(w,n)}=\begin{cases} \pi_w^n(v_{2n-j+2}^{2n+1}) & \mbox{ if } 1\leq j\leq 2n-r-1,\cr 
			\pi_w^n(v_{j+1}^{2n+1}) & \mbox{ otherwise}.\cr 
		\end{cases} 
		\]
		The permutation $\sigma$ is defined as follows:
		\[
		\sigma(j)=\begin{cases} 2n-j & \mbox{ if } 2n-r\leq j\leq r,\cr
			j & \mbox{ otherwise}.\cr
		\end{cases}
		\]
		Using the diagram  associated to the representation $\pi_w^n$, we get
		\[
		P_j^{(w,n)}\thicksim\begin{cases} 1^{\otimes\sum_{i=1}^{n-1}\ell(w_i)}\otimes 1^{\otimes\sigma(j)-1}\otimes \underbrace{\sqrt{1-q^{4N}}S^*}_{\sigma(j)-th~~ \text{place}}\otimes 1^{r-\sigma(j)} & \mbox{ if } j\neq n,\cr
			1^{\otimes\sum_{i=1}^{n-1}\ell(w_i)}\otimes 1^{\otimes\sigma(n)-1}\otimes\underbrace{S^{*}\sqrt{(1+q^2)(1-q^{2N+2})}q^{N}}_{\sigma(n)-th~~ \text{place}}\otimes 1^{r-\sigma(n)}        & \mbox{ if } j=n.\cr
		\end{cases}
		\]
		on the subspace generated by basis element $e_0$ at $\sigma(k)$-th place for $k<j$. Using the identity $S^{*}\sqrt{(1+q^2)(1-q^{2N+2})}q^{N}=\sqrt{(1+q^2)(1-q^{2N})}q^{N-1}S^*$, we have
		\[
		(P_j^{(w,n)})^z\thicksim\begin{cases} 1^{\otimes\sum_{i=1}^{n-1}\ell(w_i)}\otimes 1^{\otimes\sigma(j)-1}\otimes \underbrace{(\sqrt{1-q^{4N}}S^*)^z}_{\sigma(j)-th~~ \text{place}}\otimes 1^{r-\sigma(j)} & \mbox{ if } j\neq n,\cr
			1^{\otimes\sum_{i=1}^{n-1}\ell(w_i)}\otimes 1^{\otimes\sigma(n)-1}\otimes\underbrace{(\sqrt{(1+q^2)(1-q^{2N})}q^{N-1}S^{*})^z}_{\sigma(n)-th~~ \text{place}}\otimes 1^{r-\sigma(n)}        & \mbox{ if } j=n.\cr
		\end{cases}
		\]
		With these expressions in hand, one can directly verify the claim.\\
		\noindent \textbf{ Case 2: $r \leq n$.} In this case, we have $w=w_1w_2\cdot\cdot\cdot w_{n-1}s_1s_2\cdot\cdot\cdot s_{r}$. Define $\sigma(j):=j$ for all $\{1,2,\cdots ,r\}$. Let  $P_j^{(w,n)}:=\pi^{n}_{w}(v^{2n+1}_{2n-j+2}).$ Following steps given in the previous case, we get the claim.
	\end{proof}
	To extend the above result to all parts of a Weyl word, we will use the diagram embedding defined in the previous section. 
	\begin{ppsn}\label{3.3}
		Let $w \in W_n$. Then one has the following:
		$$D_{w_1} \hookrightarrow D_{w_1w_2}\hookrightarrow \cdots D_{w_1w_2\cdots w_n}.$$
	\end{ppsn}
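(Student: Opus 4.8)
The plan is to establish the chain of embeddings $D_{w_1} \hookrightarrow D_{w_1w_2} \hookrightarrow \cdots \hookrightarrow D_{w_1w_2\cdots w_n}$ by verifying each single-step embedding $D_{w_1\cdots w_k} \hookrightarrow D_{w_1\cdots w_{k+1}}$ and then invoking Proposition \ref{Diagram embedding} to compose them. By the transitivity already proved there, it suffices to produce, for each $k$, an embedding map
\[
\lambda_k^{k+1}: \{L_n^k, L_n^k+1, \ldots, M_n^k\} \longrightarrow \{L_n^{k+1}, L_n^{k+1}+1, \ldots, M_n^{k+1}\}
\]
satisfying the four conditions (i)--(iv) of Definition \ref{definition of diagram embedding}, where the relevant representation is $\pi^n_{w_{k+1}}$ corresponding to the single part $w_{k+1} = \psi_{n-k, k_{n-k}}^{(\epsilon_{n-k})}$.

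\textbf{The construction of $\lambda_k^{k+1}$.} First I would recall that $L_n^k = n-k+1$ and $M_n^k = n+k+1$, so the source index set is the ``central block'' of $2k+1$ nodes symmetric about $n+1$, and the target set is the slightly larger block of $2k+3$ nodes. Since $w_{k+1}$ is a single $\psi$-type part involving the reflections $s_{n-k}, \ldots, s_n$ at the outer layer, the natural candidate is the inclusion shifted to respect the symmetric structure: I expect $\lambda_k^{k+1}$ to send each central node $i$ to the node whose image survives when the part $w_{k+1}$ acts on $e_0 \otimes \cdots \otimes e_0$. Concretely, one reads off from Diagram 1 and Diagram 2 (the elementary representations $\pi_{s_i}$ and $\pi_{s_n}$) which matrix entries $v_j^i$ act as $I$, as a constant multiple of identity, or as $q^{cN}$-type diagonal operators when evaluated on the lowest weight vector $e_0$. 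The point is that $\pi_{s_i}(v_m^l)(e_0) $ is a nonzero multiple of $e_0$ exactly along the ``straight'' edges and the diagonal $q^{2N}$-edges, and vanishes (because $S$ or $S^2$ kills nothing but raises the index, while $S^*$ acting on $e_0$ gives zero) along the shift edges. I would tabulate, for each $\epsilon \in \{0,1,2\}$ and each central node $i$, the unique target $\lambda_k^{k+1}(i)$ for which $\pi^n_{w_{k+1}}(v_{\lambda(i)}^i)(e_0 \otimes \cdots \otimes e_0) = C(e_0 \otimes \cdots \otimes e_0)$.

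\textbf{Verifying conditions (i)--(iv).} Condition (i) is the existence of the surviving path; condition (ii) is that no other central source node $j \ne i$ maps along a surviving path into $\lambda_k^{k+1}(i)$; and (iii)--(iv) are the analogous statements for the adjoints $(v^i_{\lambda(i)})^*$, using the involution formula $(v_l^k)^* = \epsilon_k \epsilon_l q^{\rho_k - \rho_l} v_{l'}^{k'}$. Here I would exploit the symmetry $i \mapsto i' = 2n+2-i$ built into the diagrams: because the diagrams are invariant under the central reflection, verifying the starred conditions reduces to the unstarred ones at the mirror nodes, so conditions (iii)--(iv) follow from (i)--(ii) essentially for free. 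The computation itself amounts to tracing paths through the concatenated diagram of $\pi^n_{w_{k+1}}$ evaluated at the all-$e_0$ vector, exactly the kind of path-counting illustrated in Figure \ref{diagram1}; since $S^*e_0 = 0$ and $q^{N}e_0 = e_0$, only finitely many ``flat'' paths survive, and the diagram bookkeeping pins down $\lambda_k^{k+1}$ uniquely.

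\textbf{The main obstacle} I anticipate is the case analysis over $\epsilon_{n-k} \in \{0,1,2\}$ together with the special behavior of the node $n$ (the short-root reflection $s_n$, whose representation $\pi_{s_n}$ in Diagram 2 uses the $S^2$, ${S^*}^2$, and the mixed $q^N S$ edges rather than the simpler type-$A$ edges of $\pi_{s_i}$). When $\epsilon = 2$ the part $w_{k+1}$ sweeps all the way out to $s_n$ and back, so a surviving path from a central node must traverse the $s_n$-block, and I must check carefully that the $S^2/{S^*}^2$ edges there do not destroy the surviving flat path nor create a spurious second one violating condition (ii). Once the $\epsilon=2$ case with the $s_n$ passage is handled, the $\epsilon=1$ and $\epsilon=0$ cases are strictly simpler (the latter being the empty part, for which $\lambda_k^{k+1}$ is just the obvious inclusion and all four conditions are immediate). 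Having checked every single step, Proposition \ref{Diagram embedding} assembles them into the full chain, completing the proof.
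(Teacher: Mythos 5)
Your proposal follows essentially the same route as the paper: define the one-step embedding map $\lambda_k^{k+1}$ from the structure of the part $w_{k+1}$ (shifting a node by one exactly when the corresponding simple reflection occurs once in $w_{k+1}$, and fixing it otherwise), verify conditions (i)--(iv) by tracing paths of the diagram on $e_0\otimes\cdots\otimes e_0$, and assemble the chain via Proposition \ref{Diagram embedding}. One small caution: with the paper's convention $S\colon e_n\mapsto e_{n-1}$ one has $Se_0=0$ but $S^*e_0=e_1\neq 0$, so in checking condition (ii) the paths through $S^*$-type edges do not vanish automatically on the all-$e_0$ vector and must instead be ruled out (or shown to pass through an $S$-type edge) by the diagram bookkeeping.
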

	\begin{proof} 
		Fix $1 \leq i \leq n$. 	From Proposition \ref{3.2}, we see that the Weyl word $w_{i+1}$ is either of the form $s_{i}s_{i+1}\cdots s_{k}$      
		or $s_{i}s_{i-1}\cdots s_{n-1}s_{n}s_{n-1}\cdots s_{k}.$
		Therefore for all $1\leq k<n$, $s_k$ occurs in $w_{i+1}$ either once or twice or does not occur. \\For $j\leq n$, we define
		\[
		\lambda_i^{i+1}(j)=\begin{cases} j-1 & \mbox{ if } s_{j-1} ~\text{occurs in}~ w_{i+1}~~\text{once},\cr
			j & \mbox{ otherwise}.\cr
		\end{cases}
		\]
		For $j> n+1$, we define
		\[
		\lambda_i^{i+1}(j)=\begin{cases} j+1 & \mbox{ if } s_{2n-j+1} ~\text{occurs in}~ w_{i+1}~~\text{once},\cr
			j & \mbox{ otherwise}.\cr
		\end{cases}
		\]
		For $j=n+1$, we define $\lambda_i^{i+1}(n+1)=n+1$. We can now directly verify all the conditions for diagram embedding  $D_{w_1w_2\cdots w_{i}} \hookrightarrow  D_{w_1w_2\cdots w_{i+1}}$ given in (\ref{definition of diagram embedding}) using diagram representation.
	\end{proof}
	For $w \in W_n$, consider the module $V_w$ and  a generic vector $v=v_{<i} \otimes v_i \otimes v_{>i}\in V_w$, where $ v_{<i} \in V_{w_1} \otimes \cdots \otimes V_{w_{i-1}}$, $v_i\in V_{w_i}$, and $ v_{>i} \in V_{w_{i+1}} \otimes \cdots\otimes V_{w_n}$. In the following lemma,  we  produce, using the diagram embeddings proved above,  some endomorphisms  for each $1\leq i \leq n$ that acts only the $i$-th part $v_i$ of a vector $v$, keeping  other parts the same provided $v_{>i}=e_0\otimes \cdots\otimes e_0$.  More importantly, these endomorphisms acts on $v_i$ in the same way as the variables mentioned in Lemma \ref{4.2a} assuming that the rank of the Lie algebra is $i$. Since we are talking about $\mathfrak{so}(2n+1)$ and $\mathfrak{so}(2i+1)$ at the same time, it is important to distinguish the generators of $ \mathcal{O}(SO_q(2i+1))$, and $ \mathcal{O}(SO_q(2n+1))$.  We denote by $(v_{l,s}^{k})$ the standard generators of $ \mathcal{O}(SO_q(2s+1))$.
	\blmma \label{lowerrank}
	Let $V_w$ be the $ \mathcal{O}(SO_q(2n+1))-$module associated to the Weyl word $w$. Fix $1\leq i \leq n$.  Then there exist  endomorphisms $T_1^i,T_2^i,\cdots ,T_{2i+1}^i\in \{\pi_{w}^{n}(v_{l,n}^k):1\leq l,k\leq 2n+1\}$ such that for all $1\leq j\leq 2i+1$, we have
	\begin{align}
		T^{i}_{j}(u_1\otimes u_2 \otimes e_0\otimes\cdots \otimes e_0)
		&=C\,u_1\otimes(\pi_{w_i}^i(v_{j,i}^{2i+1})u_2)\otimes e_0\otimes\cdots\otimes e_0,
	\end{align}
	where  $u_{1}\in V_{w_1}\otimes V_{w_2}\otimes\cdots \otimes V_{w_{i-1}}$ and $u_2\in V_{w_{i}}$.
	\elmma
	\begin{proof}
		From Proposition \ref{3.3}, we have 
		$$D_{w_1w_2\cdots w_i} \hookrightarrow D_{w_1w_2\cdots w_n}.$$ Let $\lambda_{i}^n$ be the associated embedding map.    For $1 \leq j \leq 2i+1$, define 
		$$T_j^i=\pi_{w}^n(v_{\lambda_{i}^n(j),n}^{n+i+1}).$$
		One can see, using the diagram representation that  if $k \notin \{n-i+1,\cdots ,n+i+1\}$, then $$\pi_{w_1\cdots w_i}^{n}(v_k^{n+i+1})=0.$$
		Using this and the definition of diagram embedding (\ref{definition of diagram embedding}), we get
		\begin{IEEEeqnarray*}{rCl}
			T^{i}_{j}(u_1\otimes u_2 \otimes e_0\otimes\cdots \otimes e_0)
			&=&\pi_{w}^n(v_{\lambda_{i}^n(j),n}^{n+i+1})(u_1\otimes u_2 \otimes e_0\otimes\cdots \otimes e_0)\\
			&=&\sum_{k=1}^n\pi_{w_1\cdots w_i}^n(v_{k,n}^{n+i+1})(u_1\otimes u_2)\otimes \pi_{w_{i+1}\cdots w_n}^n(v_{\lambda_{i}^n(j),n}^{k})(e_0\otimes\cdots \otimes e_0)\\
			&=&\sum_{k=L_n^i}^{M_n^i}\pi_{w_1\cdots w_i}^n(v_{k,n}^{n+i+1})(u_1\otimes u_2)\otimes \pi_{w_{i+1}\cdots w_n}^n(v_{\lambda_{i}^n(j),n}^{k})(e_0\otimes\cdots \otimes e_0)\\
			&=&C\,\pi_{w_1w_2\cdots w_i}^i(v_{j,i}^{2i+1})(u_1\otimes u_2)\otimes e_0\otimes\cdots \otimes e_0\\
			&=&C \,u_1\otimes \pi_{w_1w_2\cdots w_i}^i(v_{j,i}^{2i+1})(u_2)\otimes e_0\otimes\cdots \otimes e_0.
		\end{IEEEeqnarray*}
	\end{proof}
	\begin{lmma}\label{3.5}
		Suppose $w\in W_n$. Then for each $1\leq i \leq n$ and $1\leq j\leq \ell(w)$, there exist polynomials $P_j^{(w,i)}$ with noncommuting variables $\pi_w^n(v_l^k)$'s and a permutation $\sigma_i$ of $\{1,2,\cdots,\ell(w_i)\}$ such that
		\begin{align*}
			&(P_1^{(w,n)})^{r^n_{\sigma_n(1)}}(P_2^{(w,n)})^{r^n_{\sigma_n(2)}}\cdots  (P_{\ell(w_n)}^{(w,n)})^{r^n_{\sigma_n(\ell(w_n))}}\cdots
			\cdots(P_1^{(w,1)})^{r^n_{\sigma_n(1)}}(P_2^{(w,1)})^{r^n_{\sigma_n(2)}}\cdots \\
			&(P_{\ell(w_1)}^{(w,1)})^{r^n_{\sigma_n(\ell(w_1)}}(e_0\otimes e_0\cdots\otimes e_0) 
			=C\, e_{r_1^1}\otimes e_{r_2^1}\cdots\otimes e_{r_{l(w_1)}^1}\cdots \otimes e_{r_{1}^n}\otimes e_{r_2^n}\otimes\cdots\otimes e_{r^n_{\ell(w_n)}},
		\end{align*}
		where $C$ is a nonzero constant.
	\end{lmma}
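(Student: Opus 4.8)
The plan is to prove the identity block by block, using the fact that each part $w_i$ of the decomposition $w=w_1w_2\cdots w_n$ can be handled inside the corresponding rank-$i$ picture and then transported to the full rank-$n$ module. Concretely, for a fixed $i$ I would first regard $w_i$ as an element of $W_i$ via the identification $W_{n,R_i}\cong W_i$; by Proposition \ref{3.2} its rank-$i$ decomposition has only its last (top) part nontrivial, so Lemma \ref{4.2a}, read at rank $i$, applies to $w_i$ and produces polynomials $Q_1^{(i)},\ldots,Q_{\ell(w_i)}^{(i)}$ in the rank-$i$ generators $\pi_{w_i}^i(v_{j,i}^{2i+1})$ together with a permutation $\sigma_i$ of $\{1,\ldots,\ell(w_i)\}$ such that
\[
(Q_1^{(i)})^{z^i_{\sigma_i(1)}}\cdots (Q_{\ell(w_i)}^{(i)})^{z^i_{\sigma_i(\ell(w_i))}}\bigl(e_0\otimes\cdots\otimes e_0\bigr)=C\,e_{z_1^i}\otimes\cdots\otimes e_{z^i_{\ell(w_i)}}
\]
on the $\ell(w_i)$ tensor factors making up block $i$. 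This fills block $i$ with arbitrary prescribed values, but only in the language of $\mathcal{O}(SO_q(2i+1))$.

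The second step is to lift this computation into the rank-$n$ module using Lemma \ref{lowerrank}. That lemma produces, for each $i$, operators $T_1^i,\ldots,T_{2i+1}^i$ of the form $\pi_w^n(v_l^k)$ which act on block $i$ exactly as $\pi_{w_i}^i(v_{j,i}^{2i+1})$ acts on $V_{w_i}$, provided the trailing blocks are all $e_0$, and which leave an arbitrary leading factor $u_1$ untouched. I would therefore define $P_j^{(w,i)}$ to be the polynomial obtained from $Q_j^{(i)}$ by substituting $T_j^i$ for the variable $\pi_{w_i}^i(v_{j,i}^{2i+1})$; this is a polynomial in the $\pi_w^n(v_l^k)$'s, as the statement requires. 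Because a single $T_j^i$ preserves the form ``(leading blocks)$\otimes u_2\otimes e_0\otimes\cdots\otimes e_0$'', I can apply Lemma \ref{lowerrank} repeatedly along the whole word $Q_j^{(i)}$, and conclude that $\prod_j (P_j^{(w,i)})^{z^i_{\sigma_i(j)}}$ acts on block $i$ precisely as $\prod_j (Q_j^{(i)})^{z^i_{\sigma_i(j)}}$ does, fixing all leading blocks and keeping all trailing blocks equal to $e_0$.

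The final step is to compose these block operators in the order dictated by the statement. Reading the product from right to left, the block-$1$ operators act first, then the block-$2$ operators, and so on, with the block-$n$ operators acting last; putting $z_k^i=r_k^i$ yields the asserted vector, and the overall constant, being a product of the finitely many nonzero constants contributed at each stage, is again a nonzero constant. The legitimacy of the composition rests on a single bookkeeping observation: at the moment the block-$i$ operators act, the later blocks $i+1,\ldots,n$ have not yet been altered and so are still $e_0$ --- exactly the hypothesis needed by Lemma \ref{lowerrank} --- while the earlier blocks $1,\ldots,i-1$ already hold their target values and are preserved because Lemma \ref{lowerrank} permits an arbitrary leading factor.

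I expect the main obstacle to be precisely this bookkeeping: one must verify that processing the blocks in increasing order of $i$ keeps the trailing-$e_0$ hypothesis of Lemma \ref{lowerrank} intact at every stage, and that the single-generator simulation of Lemma \ref{lowerrank} really does lift to a simulation of the arbitrary noncommutative polynomials $Q_j^{(i)}$, which hinges on each $T_j^i$ returning a vector of the same shape so that the lemma can be iterated. The identification of $w_i$ with the top block at rank $i$, and the matching of $\lambda_i^n$ with the embedding $D_{w_1\cdots w_i}\hookrightarrow D_{w_1\cdots w_n}$ supplied by Proposition \ref{3.3}, are routine and I would dispatch them quickly.
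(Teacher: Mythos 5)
Your proposal is correct and takes essentially the same route as the paper: the top block is produced by Lemma \ref{4.2a}, each lower block $i$ by reading Lemma \ref{4.2a} at rank $i$ and substituting the transported operators $T^i_l$ of Lemma \ref{lowerrank} for the rank-$i$ variables, and the full identity is then checked by composing the blocks from right to left. The bookkeeping you single out (trailing blocks still $e_0$ when block $i$ is processed, leading blocks preserved as the arbitrary factor $u_1$) is precisely the ``direct verification'' the paper leaves to the reader.
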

	\begin{proof} 
		Take the same polynomials and the same permutation as given in Lemma $\ref{4.2a}$ to define $P_j^{(w,n)}$,  $1\leq j \leq \ell(w_n)$ and the permutation $\sigma_n$. Let $1\leq i <n$. We view $w_1w_2\cdots w_i$ as an element of the Weyl group $W_i$. Hence we can define polynomials $P_j^{(w_1\cdots w_i,i)}$ for  $1\leq j \leq \ell(w_i)$ and permutations $\sigma_i$ using Lemma $\ref{4.2a}$. To define  $P_j^{(w,i)}$,  replace the variables $\pi_{w_i}^i(v_{l,i}^{2i+1})$ in  $P_j^{(w_1\cdots w_i,i)}$ with $T_l^{n+i+1}$, $1\leq l \leq 2i+1$. The permutation $\sigma_i$ remains the same. The claim now follows by a direct verification using Lemma \ref{3.3} and Lemma \ref{lowerrank}.
	\end{proof}
	
	\begin{thm}\label{GKdim1}
		Let $t \in \bbbt^n$ and  $w\in W_n$. Then we have $$\text{GKdim}(V_{t,w})=\ell(w).$$
	\end{thm}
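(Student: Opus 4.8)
The plan is to combine the upper bound already in hand with a matching lower bound extracted from the reachability statement of Lemma~\ref{3.5}. Proposition~\ref{3.1} gives $\text{GKdim}(V_{t,w})\le \ell(w)$, so it remains only to prove $\text{GKdim}(V_{t,w})\ge \ell(w)$. First I would reduce to the case $t=1$. Since $V_{t,w}=V_t\ast V_w$ with $V_t$ one-dimensional and $\tau_t(v_l^k)=c_k\delta_{kl}$ for scalars $c_k$ of modulus one, the computation $\pi_{t,w}^n(v_l^k)=(\tau_t\otimes\pi_w^n)\Delta(v_l^k)=c_k\,\pi_w^n(v_l^k)$ shows that the action on $V_{t,w}$ differs from that on $V_w$ only by nonzero scalar factors on the generators, while the underlying vector space $c_{00}(\bbn)^{\otimes \ell(w)}$ is the same. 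Hence every conclusion of Lemma~\ref{3.5} holds verbatim for $\pi_{t,w}^n$ after absorbing these phases into the constant $C$, and it suffices to establish the lower bound for $V_w$.

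With $\xi=\{v_l^k:1\le k,l\le 2n+1\}\cup\{1\}$ and $W=\bbc\,(e_0\otimes\cdots\otimes e_0)$ as fixed in Definition~\ref{3.1.}, set $m=\ell(w)=\sum_{i=1}^n\ell(w_i)$, which is exactly the number of tensor factors of $V_w$. Each polynomial $P_j^{(w,i)}$ occurring in Lemma~\ref{3.5} is a fixed noncommutative polynomial in the generators $\pi_w^n(v_l^k)$; let $d$ be the maximum of their degrees, a constant independent of the exponents. Then the ordered product of powers appearing in Lemma~\ref{3.5}, whose total exponent is $p:=\sum_{i,j}r_j^i$, lies in the linear span of $\xi^{dp}$. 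By Lemma~\ref{3.5} this product sends $e_0\otimes\cdots\otimes e_0$ to $C\,(e_{r_1^1}\otimes\cdots\otimes e_{r^n_{\ell(w_n)}})$ with $C\neq 0$. Consequently, for every tuple $(r_1,\dots,r_m)\in\bbn^m$ with $\sum_j r_j\le r/d$, the basis vector $e_{r_1}\otimes\cdots\otimes e_{r_m}$ lies in $\xi^r W$.

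Finally I would count. The vectors $e_{r_1}\otimes\cdots\otimes e_{r_m}$ form part of the standard basis of $c_{00}(\bbn)^{\otimes m}$ and are therefore linearly independent, and the number of tuples with $\sum_j r_j\le r/d$ is $\binom{\lfloor r/d\rfloor+m}{m}$, which grows like a constant times $r^m$. Thus $\dim(\xi^r W)\ge C'r^m$ for large $r$, and
\[
\text{GKdim}(V_{t,w})=\limsup_{r\to\infty}\frac{\ln\dim(\xi^r W)}{\ln r}\ge\limsup_{r\to\infty}\frac{\ln(C'r^m)}{\ln r}=m=\ell(w),
\]
which together with Proposition~\ref{3.1} gives equality.

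The step I expect to be the main obstacle is the degree bookkeeping of the previous two paragraphs: one must be certain that the single product exhibited in Lemma~\ref{3.5} realizes each target tuple with a per-factor degree $d$ that is uniform in the exponents $r_j^i$, so that membership in the span of $\xi^{dp}$ holds with $d$ independent of the tuple, and that distinct tuples yield distinct, linearly independent images. Both facts are consequences of Lemma~\ref{3.5}, but they are precisely what converts the qualitative reachability of every basis vector into the sharp polynomial growth rate $r^{\ell(w)}$ needed to match the upper bound.
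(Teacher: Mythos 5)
Your proposal is correct and follows essentially the same route as the paper: upper bound from Proposition \ref{3.1}, reduction to $t=1$ via the scalar action of $\tau_t$, and a lower bound obtained by bounding the degrees of the polynomials in Lemma \ref{3.5} by a uniform constant and counting the resulting linearly independent basis vectors $e_{r_1}\otimes\cdots\otimes e_{r_{\ell(w)}}$, which yields growth of order $r^{\ell(w)}$. The only cosmetic difference is that you count tuples with $\sum_j r_j\le r/d$ while the paper counts tuples with $\sum_i\beta_i=r$ inside $\xi^{Ar}$; both give the same binomial-coefficient asymptotics.
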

	\begin{proof}
		In view of Proposition \ref{3.1}, it is enough to show that $$\text{GKdim}(V_{t,w})\geq\ell(w).$$ Since  GKdim($V_{w,t}$)=GKdim($V_w$), it suffices to prove the claim
		for $V_{w}$. 
		Define 
		$$A:=max\{\text{ degree of } P_{j}^{(w,n)}:1\leq i\leq n,1\leq j \leq \ell(w_i)\}$$
		From Lemma $\ref{3.5}$, it follows that
		$$\text{span} ~\pi_w^n(\xi^{Ar})(e_0\otimes e_0\otimes\cdots\otimes e_0)\supset \text{span}\{{e_{\beta_1}\otimes e_{\beta_2}\otimes\cdots\otimes e_{\beta_{\ell(w)}}:\sum_{i=1}^{\ell(w)}\beta_{i}=r}\}.$$
		So we get,
		$$\text{dim (span~} \pi_w^n(\xi^{Ar})(e_0\otimes e_0\otimes\cdots\otimes e_0))\geq {r+\ell(w)-1  \choose r }.$$
		Therefore, we have
		$$GKdim(V_w)\geq \limsup \frac{\text{ln}~ \mbox{dim}(\text{ span}~ \pi_w^n(\xi^{Ar})(e_0\otimes e_0\otimes\cdots\otimes e_0))}{\text{ln}\, r}\geq \limsup \frac {\text{ln}{r+\ell(w)-1  \choose r} }{\text{ln} ~r}=\ell(w).$$\\
	\end{proof}
	To compute GKdim of $V_{t,w}^{\mbox{Spin}}$, our choice for the set of finite generators of the algebra $\mathcal{O}(\mbox{Spin}_q(2n+1))$ will be  $\Gamma=\xi \cup \zeta$, where $\zeta:=\{x_j^i:1\leq i,j\leq 2^n\}$ consists of the generators of $\mathcal{O}(\mbox{Spin}_q(2n+1))$, and the choice for the finite dimensional vector space will be the subspace generated by $e_0\otimes e_0 \otimes \cdots \otimes e_0$. 
	\bthm \label{GKdim2}
	Let $t \in \bbbt^n$ and  $w\in W_n$. Then we have $$\text{GKdim}(V_{t,w}^{\mbox{Spin}})=\ell(w).$$
	\ethm 
	\prf 
	First observe that if we replace $\xi$ by $\Gamma=\xi \cup \zeta$ in the proof of Proposition \ref{3.1}, we get 
	\[
	\text{GKdim}(V_{t,w}^{\mbox{Spin}})\leq \ell(w).
	\] 
	From the remark $(\ref{restriction})$, we get 
	\[
	\text{dim (span~} \pi_w^n(\xi^{r})(e_0\otimes e_0\otimes\cdots\otimes e_0))\leq \text{dim (span~} \pi_w^n(\Gamma^{r})(e_0\otimes e_0\otimes\cdots\otimes e_0))
	\]
	for any $r \in \bbn$. The result follows from this  and Theorem \ref{GKdim1}.
	\qed 
\section{GKdim of  homogeneous spaces of $SO_q(2n+1)$}
Given a subset   $R$  of $\Pi$, one can associate a quotient space of $\mbox{Spin}_q(2n+1)$ and $SO_q(2n+1)$. In this section, our main objective is to show that  GK dim of these quotient spaces is same as the manifold dimension of their classical counterpart.  Let us first recall from \cite{NesTus-2012aa} the definition of the quantized function algebra on a quotient space of $\mbox{Spin}_q(2n+1)$ and $SO_q(2n+1)$.

Fix a subset $R$ of $\Pi = \{\alpha_1, \cdots, \alpha_n\}$.  Let $K^R$ and $\tilde{K}^R$ be  closed connected subgroups of $\mbox{Spin}(2n+1)$ and $SO(2n+1)$ respectively,  having the same complexified Lie algebra ${\mathfrak{k}}^R$, which is    generated by the elements $E_i$, $F_i$ and $H_i$ corresponding to $\alpha_i\in R$. 
Let  $P(R^c)$ subgroup of the weight lattice $P$, generated by the fundamental weight $w_i$ for $i\in R^c=\Pi-R$. Also, we identify $P$ with  $\mathbb{T}^n$, and we denote the annihilator of $P(R^c)$ in $\mathbb{T}^n$ by $\mathbb{T}^n_{P(R^c)}$. 

\begin{dfn}
	Let $R \subset \Pi$ and  $I_R=\{i \in \{1,2,\cdots , n\}: \alpha_i \in R\}$. The quantized algebra $\mathcal{O}(\mbox{Spin}_q(2n+1)/K_q^{R,P(R^c)})$ is defined as   the $\ast$-subalgebra of $\mathcal{O}(\mbox{Spin}_q(2n+1))$ consisting of all element $v$ which satisfies the following:
	\[
	E_iv = F_iv = 0, \, \,
	tv = v, \,\, \mbox{ for all } t \in \mathbb{T}^n_{P(R^c)}, \, i  \in I_R.
	\]
	Here the action of $E_i$'s, $F_i$'s and $t \in \mathbb{T}^n_{P(R^c)}$ is  as mentioned in  equation (\ref{module}) and equation  (\ref{torus}).\\
\noindent The quantized algebra $\mathcal{O}(SO_q(2n+1)/K_q^{R,P(R^c)})$  of regular functions  on the quotient space   $SO_q(2n+1)/K_q^{R,P(R^c)}$ is defined as follows:
	\[
\mathcal{O}(SO_q(2n+1)/K_q^{S,P(R^c)}) = \{v \in \mathcal{O}(SO_q(2n+1)): E_iv = F_iv = 0, 
 tv = v ~\forall i \in I_R, t \in \mathbb{T}^n_{P(R^c)}\}.
	\]
	It is a   $\ast$-subalgebra of  $\mathcal{O}(SO_q(2n+1))$ as well as $\mathcal{O}(\mbox{Spin}_q(2n+1)/K_q^{R,P(R^c)})$.
	\edfn
The representation $\pi^n_{t,w}$ when restricted to $\mathcal{O}(SO_q(2n+1)/\tilde{K}_q^{R,P(R^c)})$ gives a irreducible representation of $\mathcal{O}(SO_q(2n+1)/\tilde{K}_q^{R,P(R^c)})$. In the same way, the irreducible representation $\pi^{n, \mbox{Spin}}_{t,w}$ of  $\mathcal{O}(\mbox{Spin}_q(2n+1))$ induces an irreducible representation of $\mathcal{O}(\mbox{Spin}_q(2n+1)/K_q^{R,P(R^c)})$. We  continue to denote these new representations and the corresponding modules by the same notations.  Observe that $\pi^{n}_{t,w}$ is the restriction of  $\pi^{n, \mbox{Spin}}_{t,w}$ to $\mathcal{O}(SO_q(2n+1)/\tilde{K}_q^{R,P(R^c)})$.
\begin{thm}{\cite{NesTus-2012aa}}\label{4.2}
	For  $w\in W_n^R$ and $t\in\mathbb{T}^n$,  the representation $\pi^{n, \mbox{Spin}}_{t,w}$  of $\mathcal{O}(\mbox{Spin}_q(2n+1)/K_q^{R,P(R^c)})$  is irreducible. All irreducible representations of $\mathcal{O}(\mbox{Spin}_q(2n+1)/K_q^{R,P(R^c)})$ is of the above type up to equivalence.  Moreover, two representations, $\pi^{n, \mbox{Spin}}_{t_1,w_1}$  and $\pi^{n, \mbox{Spin}}_{t_2,w_2}$ , are equivalent if and only if $w_1=w_2$ and $t_2t_1^{-1}\in\mathbb{T}^n_{P(R^c)}$. In this case, these representations are actually equal. 
\end{thm}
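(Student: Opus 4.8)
The plan is to derive the classification from the known description of all simple unitarizable $\mathcal{O}(\mbox{Spin}_q(2n+1))$-modules together with the factorization $W_n=W_{n,R}\cdot W_n^R$. By Theorem \ref{2.3} every irreducible $*$-representation of $\mathcal{O}(\mbox{Spin}_q(2n+1))$ is some $\pi^{n,\mbox{Spin}}_{t,w}$, and by definition $\mathcal{O}(\mbox{Spin}_q(2n+1)/K_q^{R,P(R^c)})$ is the $*$-subalgebra of vectors annihilated by $E_i,F_i$ for $i\in I_R$ and fixed by $\mathbb{T}^n_{P(R^c)}$. I would split the argument into three parts: (i) for $w\in W_n^R$ the representation $\pi^{n,\mbox{Spin}}_{t,w}$ stays irreducible after restriction to the subalgebra; (ii) every irreducible of the subalgebra arises this way; and (iii) the stated equivalence criterion.

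For part (i) the hypothesis $w\in W_n^R$, i.e. $w(\alpha)$ positive for all $\alpha\in R$, must be used, since irreducibility need not survive restriction. I would first handle a general $w$ by writing $w=w'w''$ with $w'\in W_{n,R}$, $w''\in W_n^R$ and $\ell(w)=\ell(w')+\ell(w'')$; concatenating reduced expressions then gives $V_{t,w}^{\mbox{Spin}}\cong V_t\ast V_{w'}\ast V_{w''}$. On this tensor product I would show that the factor $V_{w'}$, assembled only from reflections $s_\alpha$ with $\alpha\in R$, is seen by the $K$-invariant and $\mathbb{T}^n_{P(R^c)}$-invariant elements only through its highest-weight line, so that restriction collapses the $w'$-factor and identifies the restriction of $\pi^{n,\mbox{Spin}}_{t,w}$ with that of $\pi^{n,\mbox{Spin}}_{t,w''}$. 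When $w=w''\in W_n^R$ there is no such factor, and I would establish irreducibility by showing that the commutant of the image of the subalgebra is trivial; since $\pi^{n,\mbox{Spin}}_{t,w}$ is already irreducible on the whole algebra, it suffices to verify that the cyclic vector $e_0\otimes\cdots\otimes e_0$ remains cyclic for the subalgebra, which is a diagram computation of the same flavour as Lemma \ref{lowerrank}.

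For part (ii) I would argue that any irreducible $*$-representation $\rho$ of the subalgebra occurs inside the restriction of some $\pi^{n,\mbox{Spin}}_{t,w}$: the subalgebra is the range of a conditional-expectation-type projection coming from the Haar state, and the full algebra acts faithfully, so $\rho$ cannot be disjoint from every restricted $\pi^{n,\mbox{Spin}}_{t,w}$. The collapsing from part (i) then forces $\rho\cong\pi^{n,\mbox{Spin}}_{t,w''}$ with $w''\in W_n^R$. For part (iii), the torus direction is that the restrictions of $\pi^{n,\mbox{Spin}}_{t_1,w}$ and $\pi^{n,\mbox{Spin}}_{t_2,w}$ coincide exactly when $t_2t_1^{-1}$ acts trivially on the subalgebra; since the subalgebra is $\mathbb{T}^n_{P(R^c)}$-fixed and the pairing between $\mathbb{T}^n/\mathbb{T}^n_{P(R^c)}$ and $P(R^c)$ is nondegenerate, this is precisely $t_2t_1^{-1}\in\mathbb{T}^n_{P(R^c)}$, and in that case the two are literally equal. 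That distinct $w_1\neq w_2$ in $W_n^R$ give inequivalent restrictions I would settle by an invariant surviving restriction: unequal lengths are separated by the Gelfand--Kirillov dimension of Theorems \ref{GKdim1} and \ref{GKdim2}, and equal-length words by the weight-support of the cyclic vector.

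The step I expect to be the main obstacle is part (i): proving that the restriction remains irreducible for $w\in W_n^R$. This is a genuinely representation-theoretic assertion in which the positivity condition $w(\alpha)>0$ for $\alpha\in R$ is indispensable, and the delicate point is to show that the $K$- and torus-invariant elements already generate enough operators to trivialize the commutant on $V_{t,w}^{\mbox{Spin}}$; the completeness in part (ii) is secondary but still needs care to exclude irreducibles invisible to every restriction.
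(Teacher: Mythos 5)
The first thing to note is that the paper does not prove this statement at all: Theorem \ref{4.2} is imported verbatim from Neshveyev--Tuset \cite{NesTus-2012aa} (building on Stokman--Dijkhuizen \cite{StoMat-1999aa}), and the authors use it as a black box. So there is no in-paper proof to compare against; what can be assessed is whether your sketch would stand on its own, and as written it would not.

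The decisive gap is in your part (i). You reduce irreducibility of the restricted representation to the claim that ``it suffices to verify that the cyclic vector $e_0\otimes\cdots\otimes e_0$ remains cyclic for the subalgebra.'' That is not sufficient: cyclicity of a single vector does not trivialize the commutant (multiplication of $\mathbb{C}[x]$ on $L^2[0,1]$ has the constant function as a cyclic vector and is maximally reducible), and the irreducibility of $\pi^{n,\mathrm{Spin}}_{t,w}$ on the \emph{full} algebra gives you no control over the commutant of the smaller image. To close this you would also need the complementary statement that from \emph{every} nonzero vector one can reach $e_0\otimes\cdots\otimes e_0$ using elements of the subalgebra (e.g.\ by exhibiting, as in the actual Stokman--Dijkhuizen/Neshveyev--Tuset arguments, invariant elements acting as weighted shifts that push any vector down to the vacuum); only cyclicity of every nonzero vector, equivalently triviality of the commutant, gives irreducibility. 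Your part (ii) has a secondary but real gap of the same flavour: the assertion that an irreducible representation of the invariant subalgebra must occur in the restriction of some $\pi^{n,\mathrm{Spin}}_{t,w}$ is a C*-algebraic fact (extend a pure state), not an automatic algebraic one, so you would need to pass to the universal C*-completions and justify that nothing is lost there. The remaining parts (the collapsing of the $W_{n,R}$-factor and the torus criterion in (iii)) are in the right spirit, but as a whole the proposal is an outline of the strategy of \cite{NesTus-2012aa} with its two hardest steps asserted rather than proved.
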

Let $\mathcal{P}(\scrt)$ be the subalgebra of $\text{End}(c_{00}(\mathbb{N}))$ generated by $\sqrt{1-q^{2N+2}}S$, $S^*\sqrt{1-q^{2N+2}}$, and $q^N$. Let $\mathcal{P}(C(\mathbb{T}))$ be the subalgebra  of $\text{End}(c_{00}(\mathbb{Z}))$  generated by shift operator $S$ and its adjoint $S^{*}$.  For $w \in W_n^R$, let 
$$\eta_{w}^{n,\mbox{Spin}}:\mathcal{O}
(\mbox{Spin}_q(2n+1)/K_q^{R,P(R^c)})\longrightarrow\mathcal{P}
(C(\mathbb{T}^n/\mathbb{T}^n_{P(R^c)}))\otimes \mathcal{P}(\scrt)^{\otimes \ell(w)}$$ 
be a homomorphism given by:
\begin{align*}
	\eta_{w}^{n,\mbox{Spin}}(a)([t])=\pi_{t,w}^n(a), \, \mbox{ for all } a\in \mathcal{O}(\mbox{Spin}_q(2n+1)/K_q^{R,P(R^c)}),\, t\in\mathbb{T}^n/\mathbb{T}^n_{P(R^c)}.
\end{align*}
Theorem $\ref{4.2}$ establishes  that  the homomorphism 	$\eta_{w}^{n,\mbox{Spin}}$ is well-defined.   The following theorem provides us a concrete faithful realization of the algebra $\mathcal{O}(\mbox{Spin}_q(2n+1)/K_q^{R,P(R^c)})$, which we will use later to compute the invariant.
\begin{thm}\label{4.3}
Let   $w_n^R$ denote the longest word of  $W_n^R$. Then  homomorphism
	$$
	\eta_{w_n^R}^{n,\mbox{Spin}}:\mathcal{O}
	(\mbox{Spin}_q(2n+1)/K_q^{R,P(R^c)})\longrightarrow\mathcal{P}(C(\mathbb{T}^n/\mathbb{T}^n_{P(R^c)}))\otimes \mathcal{P}(\scrt)^{\otimes \ell(w_n^R)}.$$
	is faithful. 
	\end{thm}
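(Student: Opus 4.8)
The plan is to show that the kernel of $\eta_{w_n^R}^{n,\mbox{Spin}}$ is trivial by comparing the single ``big cell'' stratum $w=w_n^R$ with the complete list of irreducible representations furnished by Theorem \ref{4.2}. Unwinding the definition, $\eta_{w_n^R}^{n,\mbox{Spin}}(a)([t]) = \pi^{n,\mbox{Spin}}_{t,w_n^R}(a)$, so $a \in \ker \eta_{w_n^R}^{n,\mbox{Spin}}$ precisely when $\pi^{n,\mbox{Spin}}_{t,w_n^R}(a)=0$ for every $t \in \bbbt^n$ (this descends to $\bbbt^n/\bbbt^n_{P(R^c)}$ by the equivalence clause of Theorem \ref{4.2}). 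Passing to the universal $C^{*}$-completion $C(\mbox{Spin}_q(2n+1)/K_q^{R,P(R^c)})$, which is type I with spectrum parametrised by $\{(w,[t]):w\in W_n^R\}$, I would reduce faithfulness to the separation statement
\[
\bigcap_{t\in\bbbt^n}\ker \pi^{n,\mbox{Spin}}_{t,w_n^R}\ \subseteq\ \bigcap_{w\in W_n^R,\,t\in\bbbt^n}\ker \pi^{n,\mbox{Spin}}_{t,w}=\{0\},
\]
the right-hand equality being exactly the completeness half of Theorem \ref{4.2}.

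Thus it suffices to prove, for each $w\in W_n^R$ and $t\in\bbbt^n$, the weak containment $\pi^{n,\mbox{Spin}}_{t,w}\prec\{\pi^{n,\mbox{Spin}}_{s,w_n^R}:s\in\bbbt^n\}$, i.e.\ $\bigcap_s\ker\pi^{n,\mbox{Spin}}_{s,w_n^R}\subseteq\ker\pi^{n,\mbox{Spin}}_{t,w}$. First I would use that $w_n^R$ is the longest element of $W_n^R$, so $w\le w_n^R$ in the Bruhat order and a reduced expression $s_{i_1}\cdots s_{i_N}$ of $w_n^R$ (with $N=\ell(w_n^R)$) can be chosen containing a reduced expression of $w$ as a subword. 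Then $\pi^{n,\mbox{Spin}}_{t,w_n^R}$ is the $\ast$-product $\tau_t\ast\pi_{s_{i_1}}\ast\cdots\ast\pi_{s_{i_N}}$, and $\pi^{n,\mbox{Spin}}_{t,w}$ is the analogous product over the chosen subword. The key local input is that each elementary representation $\pi_{s_i}$ weakly contains the counit $\epsilon$: pulling back along $\varphi_i^{\ast}$, this is the standard fact that in $\mathcal{O}(SU_{q_i}(2))$ the character $\epsilon$ lies in the closure of the spectrum of the representation $\pi$ defined by $\pi(\alpha)=\sqrt{1-q^{2N+2}}S$, $\pi(\beta)=q^N$ (concretely $0\in\operatorname{spec}\pi(\beta)=\{q^m:m\ge0\}\cup\{0\}$, and the vacuum states $e_m$ with $m\to\infty$ realise $\alpha\mapsto$ a shift of norm $\to 1$). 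Inserting $\epsilon$ at the deleted letters and leaving $\pi_{s_{i_j}}$ at the retained ones, while letting $t$ range over $\bbbt^n$ to absorb the torus directions, presents $\pi^{n,\mbox{Spin}}_{t,w}$ as weakly contained in the family $\{\pi^{n,\mbox{Spin}}_{s,w_n^R}\}_s$.

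To make the insertion rigorous I would invoke monotonicity of weak containment under the $\ast$-product: since $\Delta$ is a $\ast$-homomorphism and the minimal tensor product is functorial for weak containment, $\rho_1\prec\sigma_1$ and $\rho_2\prec\sigma_2$ give $\rho_1\ast\rho_2\prec\sigma_1\ast\sigma_2$; iterating across the $N$ tensor slots (with $\rho=\epsilon$ or $\rho=\pi_{s_{i_j}}$ against $\sigma=\pi_{s_{i_j}}$) yields the required containment, where the subword product is exactly $\pi^{n,\mbox{Spin}}_{t,w}$ by the definition of $V_{t,w}^{\mbox{Spin}}$. Finally, restricting to the invariant subalgebra $\mathcal{O}(\mbox{Spin}_q(2n+1)/K_q^{R,P(R^c)})$ and using Theorem \ref{4.2} to identify the resulting irreducibles closes the loop; faithfulness of $\eta_{w_n^R}^{n,\mbox{Spin}}$ follows, and the same reasoning applied to $\pi^n_{t,w}$ gives the $SO_q$ analogue.

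I expect the main obstacle to be this last step — certifying the degeneration ``delete a simple reflection $=$ insert the counit'' and that it produces precisely $\pi^{n,\mbox{Spin}}_{t,w}$ rather than some other representation. Two points need care: \textbf{(a)} the torus bookkeeping, i.e.\ checking that as $t$ ranges over $\bbbt^n$ the characters absorbed at the deleted letters sweep out exactly the equivalence classes required (this is where the hypothesis $w\in W_n^R$ and the quotient by $\bbbt^n_{P(R^c)}$ enter), and \textbf{(b)} the passage from the purely algebraic setting to the $C^{*}$-completion, namely verifying that the algebraic kernel is controlled by the norm-kernel so that $C^{*}$-theoretic weak containment may legitimately be invoked. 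Everything else — the Bruhat subword choice, the counit-in-the-closure statement for $\mathcal{O}(SU_{q_i}(2))$, and functoriality of weak containment — is standard once these are settled.
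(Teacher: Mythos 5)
The paper does not actually prove this theorem---it defers to Theorem 5.9 of Stokman--Dijkhuizen and Theorem 2.5 of Chakraborty--Saurabh---and your argument is essentially the standard proof found in those sources: every $\pi^{n,\mathrm{Spin}}_{t,w}$ with $w\in W_n^R$ is weakly contained in the big-cell family $\{\pi^{n,\mathrm{Spin}}_{s,w_n^R}\}_{s}$ via the subword property of Bruhat order together with $\epsilon\prec\pi_{s_i}$, and completeness of the classification (Theorem \ref{4.2}) plus faithfulness of the universal $C^*$-seminorm on the CQG algebra and its coideal subalgebra closes the argument, so your proposal is correct in approach. The only local slip is the parenthetical justification of $\epsilon\prec\pi$: the vector states $\omega_{e_m}$ converge to the average $\int_{\mathbb{T}}\tau_\lambda\,d\lambda$ over the circle of characters rather than to $\epsilon$ itself, so one should instead pass to the quotient of the Toeplitz algebra $C^*(\pi(\mathcal{O}(SU_{q_i}(2))))$ by the compacts (or note that each $\tau_\lambda$ is weakly contained in the GNS representation of that limit state), which does not affect the conclusion.
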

	We omit the proof as it is an immediate consequence of Theorem 5.9 of $\cite{StoMat-1999aa}$. To see the details, we refer the reader to Theorem $2.5$ of \cite{ChaSau-2019aa}.
	Viewing $\mathcal{O}(SO_q(2n+1)/\tilde{K}_q^{R,P(R^c)})$ as a $\ast$-subalgebra of $\mathcal{O}(\mbox{Spin}_q(2n+1)/K_q^{R,P(R^c)})$, one can restrict the homomorphism 	$\eta_{w_n^R}^{n,\mbox{Spin}}$ to  $\mathcal{O}(SO_q(2n+1)/\tilde{K}_q^{R,P(R^c)})$ and get a homomorphism $\eta_{w_n^R}^n$. 
\bcrlre
Let   $w_n^R$ denote the longest word of  $W_n^R$. Then  homomorphism
$$
\eta_{w_n^R}^{n}:\mathcal{O}(SO_q(2n+1)/\tilde{K}_q^{R,P(R^c)})\longrightarrow\mathcal{P}(C(\mathbb{T}^n/\mathbb{T}^n_{P(R^c)}))\otimes \mathcal{P}(\scrt)^{\otimes \ell(w_n^R)}.$$
is faithful. 
\ecrlre
\prf It follows from the fact that $\eta_{w_n^R}^{n}=	\restr{\eta_{w_n^R}^{n,\mbox{Spin}}}{\mathcal{O}(SO_q(2n+1)/\tilde{K}_q^{R,P(R^c)})}$ and Theorem \ref{4.3}.
\qed

\bdfn For a  unital  algebra $A$, the Gelfand-Kirillov dimension  is defined as follows:
\[
\mbox{GKdim}(A) = \sup_{\xi} \limsup_{r\to\infty} \frac{\ln\, \dim(\xi^{r})}{\ln\,r}.
\]
Here $\xi$ varies over all finite sets of $A$ that  contains $1$. 
\edfn
Some properties of the GKdim are mentioned below.
\begin{itemize}
	\item If $B$ is a subalgebra of $A$ containing the multiplicative identity of $A$, then GKdim$(B) \leq$ GKdim$(A)$, (see \cite{Row-1991aa}).
	\item GKdim$(A \otimes B)\leq$  GKdim$(A) +$ GKdim$(B)$.
	\item GKdim($\mathcal{P}(C(\mathbb{T}))=1$ and GKdim$(\mathcal{P}(\scrt))=2$ (See Proposition $3.3$, \cite{ChaSau-2019aa}).
\end{itemize}
Invoking these results,  we obtain the following lower bound for  GKdim$(\mathcal{O}(\mbox{Spin}_q(2n+1)/K_q^{R,P(R^c)}))$.
\begin{ppsn}\label{4.5}
	Let $w_n^R $ be the longest element of $W_n^R$. Then we get the following:
	$$\text{GKdim}(\mathcal{O}(\mbox{Spin}_q(2n+1)/K_q^{R,P(R^c)}))\leq 2\ell(w_n^R)+ \# P(R^c).$$
\end{ppsn}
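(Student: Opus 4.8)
The plan is to read off the bound directly from the faithful realization of Theorem~\ref{4.3}, combined with the monotonicity of GKdim under passage to a unital subalgebra and its subadditivity over tensor products (both recorded in the list of properties above). Set $A=\mathcal{O}(\mbox{Spin}_q(2n+1)/K_q^{R,P(R^c)})$, and let $B=\mathcal{P}(C(\mathbb{T}^n/\mathbb{T}^n_{P(R^c)}))\otimes\mathcal{P}(\scrt)^{\otimes\ell(w_n^R)}$ denote the target of $\eta_{w_n^R}^{n,\mbox{Spin}}$. Since this homomorphism is a unital injective $\ast$-map, it identifies $A$ with a unital subalgebra of $B$, so that $\mbox{GKdim}(A)\leq\mbox{GKdim}(B)$.

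Next I would bound $\mbox{GKdim}(B)$ by subadditivity over the tensor product, obtaining
$$\mbox{GKdim}(B)\leq \mbox{GKdim}\big(\mathcal{P}(C(\mathbb{T}^n/\mathbb{T}^n_{P(R^c)}))\big)+\ell(w_n^R)\,\mbox{GKdim}(\mathcal{P}(\scrt)).$$
Because $\mbox{GKdim}(\mathcal{P}(\scrt))=2$, the second summand contributes exactly $2\ell(w_n^R)$, which already accounts for the $q$-Weyl part of the asserted bound.

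For the first summand I would argue that $\mathbb{T}^n/\mathbb{T}^n_{P(R^c)}$ is again a torus whose dimension equals $\# P(R^c)$, so that $\mathcal{P}(C(\mathbb{T}^n/\mathbb{T}^n_{P(R^c)}))\cong\mathcal{P}(C(\mathbb{T}))^{\otimes \# P(R^c)}$. Using $\mbox{GKdim}(\mathcal{P}(C(\mathbb{T})))=1$ together with subadditivity once more gives $\mbox{GKdim}\big(\mathcal{P}(C(\mathbb{T}^n/\mathbb{T}^n_{P(R^c)}))\big)\leq \# P(R^c)$. Assembling the three estimates yields $\mbox{GKdim}(A)\leq 2\ell(w_n^R)+\# P(R^c)$, as claimed.

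The only step that is not a purely formal manipulation, and hence the one I would treat most carefully, is the identification of the quotient torus: one must verify that $\mathbb{T}^n_{P(R^c)}$, the annihilator of the rank-$\# P(R^c)$ sublattice $P(R^c)$ inside $\mathbb{T}^n\cong \mbox{Hom}(P,\mathbb{T})$, is a closed subtorus of codimension $\# P(R^c)$, so that the quotient genuinely has dimension $\# P(R^c)$ and its function algebra is a tensor power of the circle algebra $\mathcal{P}(C(\mathbb{T}))$. This is the duality point between the weight lattice and the torus; once it is settled, the remaining inequalities follow formally from Theorem~\ref{4.3} and the listed properties of GKdim.
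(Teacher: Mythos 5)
Your proposal is correct and follows essentially the same route as the paper: both pass through the faithful realization of Theorem \ref{4.3} to view the quotient algebra as a unital subalgebra of $\mathcal{P}(C(\mathbb{T}^n/\mathbb{T}^n_{P(R^c)}))\otimes\mathcal{P}(\scrt)^{\otimes\ell(w_n^R)}$ and then apply monotonicity under subalgebras, subadditivity over tensor factors, and the known values $\mbox{GKdim}(\mathcal{P}(\scrt))=2$ and $\mbox{GKdim}(\mathcal{P}(C(\mathbb{T})))=1$. The paper states this in two lines without spelling out the identification of the quotient torus; your explicit attention to that duality point is a reasonable elaboration rather than a deviation.
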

\begin{proof}
	The algebra $\mathcal{O}(\mbox{Spin}_q(2n+1)/K_q^{R,P(R^c)})$ can be view as a subalgebra of $\mathcal{P}(C(\mathbb{T}^n/\mathbb{T}^n_{P(R^c)}))\otimes \mathcal{P}(\mathcal{J})^{\otimes \ell(w_n^R)}$, thanks to Theorem $\ref{4.3}$. Thus, we have following inequality:
	$$\text{GKdim}(\mathcal{O}(\mbox{Spin}_q(2n+1)/K_q^{R,P(R^c)}))\leq \text{GKdim}(\mathcal{P}(C(\mathbb{T}^n/\mathbb{T}^n_{P(R^c)}))\otimes \mathcal{P}(\scrt)^{\otimes \ell(w_n^R)})\leq 2\ell(w_n^R)+ \# P(R^c).$$
\end{proof}

We will show that equality holds in Proposition $\ref{4.5}$ for specific choices of $R$. Let $R_m$ be a subset of $\Pi$ defined as follows:

\[
R_{m}=\begin{cases} \emptyset & \mbox{ for } m=1,\cr 
	\{n-m+2,\cdots,n\} & \mbox{ for } 2\leq m\leq n.\cr \end{cases}
\]
To get  an appropriate finite subset  of $\mathcal{O}(\mbox{Spin}_q(2n+1)/K_q^{R_m,P(R_m^c)})$,
define $$\zeta_m =\{v_l^k:1\leq l \leq 2n+1, k \in \{1,\cdots,n-m+1\}\cup \{n+m,\cdots,2n+1\} \}\cup \{1\}.$$
It is easy to verify that $$\zeta_m\subset \mathcal{O}(SO_q(2n+1)/K_q^{R_m,P(R_m^c)})\subset\mathcal{O} (\mbox{Spin}_q(2n+1)/K_q^{R_m,P(R_m^c)}).$$  
Let $\phi$ be an isomorphism between $\mathbb{T}^{n-m+1}$ and $\mathbb{T}^n/\mathbb{T}^n_{P(R^c_m)}$ given by; 
$$\phi(t)=[(t,1,\cdots,1)]~~\forall\, t\in \mathbb{T}^{n-m+1}.$$
Consider the action $\eta_{e}^n$ of $\mathcal{O}(SO_q(2n+1)/K_q^{R_m,P(R_m^c)})$ on the vector space $c_{00}(\mathbb{Z})^{\otimes (n-m+1)}$. So we can write as $\eta_e^n=\tau_{[t,1,\cdots,1]}(a)$ for all $a\in\mathcal{O}(SO_q(2n+1))$ and $t\in\mathbb{T}^{n-m+1}$. For any $w\in W_n^{R_m}$, we have $$\eta_w^n=\eta_e^n\ast\pi_{w}^n.$$
Hence, for $v_l^k\in \zeta_m$, we have
\[
\eta_e^n(v_l^k)=\begin{cases} \delta_{ij} 1^{\otimes 2n-k}\otimes S^{*}\otimes 1^{\otimes k-m-n}  & \mbox{ if } k>n+1,\cr 
	\delta_{ij} 1^{\otimes n-m+1} & \mbox{ if } k=n+1,\cr
	\delta_{ij} 1^{\otimes k-1}\otimes S\otimes 1^{\otimes n-k-m+1}    & \mbox{ if } k< n+1.\cr
\end{cases}
\]
From this, we obtain:
\begin{IEEEeqnarray}{rCl} \label{equation}
	\eta_w^n(v_l^k)=(\eta_{e}^n\otimes \pi_w^n)\bigtriangleup(v^k_l)=\eta_{e}^n(v^k_k)\otimes \pi_w^n(v^k_l).
\end{IEEEeqnarray}
Let us record the following facts, which will be used later in proving our main result. Fix $M \in \bbn$.
\begin{enumerate}[(i)] 
	\item  The set $\{S^i(S^*)^{i+M}: i \in \bbn \}$ consists of  linearly independent endomorphims of $c_{00}(\bbn)$. 
	\item Let $D:e_n \mapsto d_ne_n$  be a diagonal endomorphism. Then $\{(SD)^i(D^*S^*)^{i+M}: i \in \bbn \}$ is a linearly independent  set of endomorphims of $c_{00}(\bbn)$.
\end{enumerate}

\begin{lmma}\label{5.5}
	Let $w\in W_n^{R_m}$. Then there exist a permutation $\sigma$ of $\{1,2,\cdots,\ell(w_n)\}$ and polynomials $h_{0}^{w,n},h_{1}^{w,n},\cdots,h_{\ell(w_n)}^{w,n} ~and~ h_{1*}^{w,n},h_{2*}^{w,n},\cdots,h_{\ell(w_n)*}^{w,n}$ with variables $\eta_{w}^n(v_l^{2n+1})$
	and $\eta_{w}^n((v_l^{2n+1})^*)$ 
	such that
	\begin{enumerate}[(i)]
		\item for all $u\in c_{00}(\mathbb{Z})^{\otimes(n-m)}\otimes c_{00}(\mathbb{N})^{\otimes\sum_{j=m}^{n-1}\ell(w_j)}$, we have
		\begin{IEEEeqnarray*}{lll}
			(h_{0}^{(w,n)})^{r^n_{0}}(h_{1*}^{(w,n)})^{p^n_{\sigma(1)}}(h_{1}^{(w,n)})^{r^n_{\sigma(1)}}
			\cdots (h_{\ell(w_i)^*}^{(w,n)})^{p^n_{\sigma(\ell(w_n))}}(h_{\ell(w_n)}^{(w,n)})^{r^n_{\sigma(\ell(w_n))}}
			(e_0\otimes u\otimes e_0^{\otimes \ell(w_n)}) \\
			=Ce_{r_0^n}\otimes u\otimes  e_{{r_1^n-p_1^n}}\otimes e_{{r_2^n-p_2^n}}\otimes\cdots\otimes  e_{{r_{\ell(w_n)}^n-p_{\ell(w_n)}^n}},
		\end{IEEEeqnarray*}
	where $r_0^n,r_i^n,p_i^n\in\mathbb{N}$ and $r_i^n\geq p_i^n$ for  $1\leq i\leq \ell(w_n).$
		\item The set 
		$$\{(h_{0}^{(w,n)})^{r^n_{0}}(h_{1*}^{(w,n)})^{p^n_{\sigma(1)}}(h_{1}^{(w,n)})^{r^i_{\sigma(1)}}\cdots(h_{\ell(w_n)^*}^{(w,n)})^{p^n_{\sigma(\ell(w_i))}}\\(h_{\ell(w_i)}^{(w,n)})^{r^n_{\sigma(\ell(w_n))}}:~r_j^n\geq p_j^n~r_0^n,r_j^n,p_j^n\in\mathbb{N} \}$$ consists of  linearly independent endomorphisms.
	\end{enumerate}
\end{lmma}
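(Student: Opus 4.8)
The plan is to mirror the proof of Lemma \ref{4.2a}, now carrying along the extra torus leg $c_{00}(\bbz)$ supplied by $\eta_e^n$. The structural input I would rely on is equation (\ref{equation}): for the top row $k=2n+1$ one has $\eta_w^n(v_l^{2n+1})=\eta_e^n(v_{2n+1}^{2n+1})\otimes\pi_w^n(v_l^{2n+1})$, and, crucially, the torus factor $\eta_e^n(v_{2n+1}^{2n+1})$ — the bilateral shift $S^*$ on the distinguished $c_{00}(\bbz)$-leg written at the front of the vector — does not depend on $l$. Dually, $(v_{2n+1}^{2n+1})^*=v_1^1$ and more generally $(v_l^{2n+1})^*\propto v_{l'}^{1}$, so the adjoint generators satisfy $\eta_w^n((v_l^{2n+1})^*)=\eta_e^n(v_1^1)\otimes\pi_w^n(v_{l'}^1)$, whose torus factor is $S$ on the same front leg, again independent of $l$. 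Thus the single torus degree of freedom is governed by one pair of mutually inverse shifts, which I will decouple from the module action.

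First I would import from Lemma \ref{4.2a}, applied to the rank-$n$ word $w$, the single generators $\pi_w^n(v_{l_j}^{2n+1})$ ($1\le j\le\ell(w_n)$) and the permutation $\sigma$ that realize pure coordinate shifts on the $\ell(w_n)$ legs of the top part $w_n$. Noting that both the top node $2n+1$ and the bottom node $1$ carry only the horizontal identity edge, so $\pi_w^n(v_{2n+1}^{2n+1})=\pi_w^n(v_1^1)=I$, I set
\[
h_0^{(w,n)}:=\eta_w^n(v_{2n+1}^{2n+1}),\qquad
h_j^{(w,n)}:=\eta_w^n\big((v_{2n+1}^{2n+1})^*\big)\,\eta_w^n(v_{l_j}^{2n+1}),\qquad
h_{j*}^{(w,n)}:=\eta_w^n(v_{2n+1}^{2n+1})\,\eta_w^n\big((v_{l_j}^{2n+1})^*\big).
\]
Since $SS^*=S^*S=I$ on $c_{00}(\bbz)$ and the torus legs of the two factors coincide and are $l$-independent, the torus action cancels in each product, leaving $h_0^{(w,n)}=S^*\otimes I$ (a pure shift of the front leg), $h_j^{(w,n)}=C\,I\otimes\pi_w^n(v_{l_j}^{2n+1})$ (a pure raising operator on the $\sigma(j)$-th leg of $w_n$), and $h_{j*}^{(w,n)}=C\,I\otimes\pi_w^n((v_{l_j}^{2n+1})^*)$ (the corresponding lowering operator). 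These are polynomials in the $\eta_w^n(v_l^{2n+1})$ and $\eta_w^n((v_l^{2n+1})^*)$, as required, and $\sigma$ is the permutation of Lemma \ref{4.2a}.

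With these operators, part (i) is bookkeeping. The factor $h_0^{(w,n)}$ acts only on the front $c_{00}(\bbz)$-leg, sending $e_0\mapsto e_{r_0^n}$; each block $(h_{j*}^{(w,n)})^{p_j^n}(h_j^{(w,n)})^{r_j^n}$ acts only on the $\sigma(j)$-th leg of $w_n$, first raising $e_0\mapsto e_{r_j^n}$ and then lowering to $e_{r_j^n-p_j^n}$ (here $r_j^n\ge p_j^n$ is exactly what is needed so that the lowering does not annihilate the vector on $c_{00}(\bbn)$); and all of these operators are the identity on the remaining torus legs and on the lower module parts $w_m,\dots,w_{n-1}$, so $u$ is untouched. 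Invoking Lemma \ref{4.2a} for the module legs and tracking $\sigma$ then yields the displayed formula.

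The genuinely delicate point is part (ii); one cannot argue from distinct images, since $(r_j^n,p_j^n)\mapsto r_j^n-p_j^n$ is far from injective and different exponent tuples send $e_0\otimes u\otimes e_0^{\otimes\ell(w_n)}$ to the same basis vector. Instead I would exploit that $h_0^{(w,n)}$ and the blocks for distinct $j$ act on pairwise disjoint legs, so the whole operator decouples as a tensor product
\[
(S^*)^{r_0^n}\otimes I_u\otimes\bigotimes_{j=1}^{\ell(w_n)}\big[(h_{j*}^{(w,n)})^{p_j^n}(h_j^{(w,n)})^{r_j^n}\big],
\]
the $j$-th factor acting on the $\sigma(j)$-th leg. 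On the front torus leg the operators $(S^*)^{r_0^n}$ are linearly independent over $r_0^n$ on $c_{00}(\bbz)$, which isolates $r_0^n$; on each $w_n$-leg the block $(h_{j*}^{(w,n)})^{p_j^n}(h_j^{(w,n)})^{r_j^n}$ has the form $(SD)^{p_j^n}(D^*S^*)^{r_j^n}$ on $c_{00}(\bbn)$ (up to the precise placement of the diagonal $q$-factors), and grouping by the net shift $r_j^n-p_j^n$ and applying the second linear-independence fact recorded just before the statement shows this family is linearly independent. Since a tensor product of linearly independent families of operators is linearly independent, the claim follows. The main obstacle is precisely this step: setting up the tensor decoupling cleanly and reducing the on-leg independence to the recorded fact, rather than to the coincident end-vectors; verifying the torus-shift cancellation via the $l$-independence of $\eta_e^n(v_{2n+1}^{2n+1})$ is the other point that must be checked with care.
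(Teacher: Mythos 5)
Your overall strategy---cancel the torus shift carried by the top-row generators so that the resulting operators act purely on the $w_n$-legs, then quote Lemma \ref{4.2a} for part (i) and the two linear-independence facts recorded before the statement for part (ii)---is exactly the paper's, but the operator you use to effect the cancellation is wrong, and this is a genuine gap. You set $h_0^{(w,n)}=\eta_w^n(v_{2n+1}^{2n+1})$ and justify $h_0^{(w,n)}=S^*\otimes I$ by claiming $\pi_w^n(v_{2n+1}^{2n+1})=I$ because node $2n+1$ carries only the horizontal identity edge. That is true only in the columns $s_i$ with $i\geq 2$: for $i=1$ the node $2n+1$ coincides with the node $2n-i+2$ of Diagram 1, so $\pi_{s_1}(v_{2n+1}^{2n+1})=S^*\sqrt{1-q^{4N+4}}$ and $\pi_{s_1}(v_{2n}^{2n+1})=-q^{2N}$ are both nonzero. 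Since $w_n=\psi_{1,k_1}^{(\epsilon_1)}$ always contains $s_1$ when it is nonempty, $\pi_w^n(v_{2n+1}^{2n+1})$ is a sum of path operators that raise the $s_1$-legs (and dip through node $2n$); it is not the identity, nor even diagonal. Consequently your $h_0^{(w,n)}$ does not act only on the front $c_{00}(\bbz)$-leg (applied last, it disturbs the already-populated $w_n$-legs), and your $h_j^{(w,n)}=\eta_w^n((v_{2n+1}^{2n+1})^*)\,\eta_w^n(v_{l_j}^{2n+1})$ is not $C\,I\otimes\pi_w^n(v_{l_j}^{2n+1})$; both the bookkeeping in (i) and the tensor decoupling in (ii) break down at this point.

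The paper's fix is to take $h_0^{(w,n)}=\eta_w^n(v_{2n+1-\ell(w_n)}^{2n+1})$: the unique path from node $2n+1$ to node $2n+1-\ell(w_n)$ descends one step per column of $w_n$ through $q^{2N}$-type edges, so $\pi_w^n(v_{2n+1-\ell(w_n)}^{2n+1})$ is diagonal of the form $1^{\otimes\sum_i\ell(w_i)}\otimes q^{j_1N}\otimes\cdots\otimes q^{j_{\ell(w_n)}N}$, whence $h_0^{(w,n)}\sim S^*\otimes 1\otimes\cdots\otimes 1$ in the paper's $\sim$ sense (equality up to a nonzero constant and diagonal $q^{nN}$ factors, which is all that is needed on basis vectors). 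It then sets $h_j^{(w,n)}=((h_0^{(w,n)})^*)^{t}F_j^{(w,n)}$, where $F_j^{(w,n)}$ is the polynomial $P_j^{(w,n)}$ of Lemma \ref{4.2a} rewritten in the variables $\eta_w^n$ and $t$ is its degree, so that the $(S^*)^{t}$ torus factor carried by $F_j^{(w,n)}$ is cancelled while the module part is only multiplied by a harmless diagonal. With that substitution (your observation that the torus factor $\eta_e^n(v_{2n+1}^{2n+1})=S^*$ is independent of the lower index $l$ remains the correct and essential point), the rest of your argument, including the reduction of part (ii) to the independence of $\{(SD)^i(D^*S^*)^{i+M}\}$, goes through as in the paper.
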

\begin{proof}
	First, define $h_{0}^{w,n}$ to be $\eta_{w}^n(v_{2n+1-\ell(w_n)}^{2n+1})$. It is not difficult to see that endomorphism $\pi_w^n(v_{2n+1-\ell(w_n)}^{2n+1})$ is of the form $1^{\otimes\sum_{i=m}^{n-1}\ell(w_i)}\otimes q^{j_1N}\otimes q^{j_2N}\otimes\cdots\otimes q^{j_{\ell(w_n)}N}$. Hence we have
	$$h_{0}^{w,n}=\eta_{e,n}(v_{2n+1}^{2n+1})\otimes \pi_w^n(v_{2n+1-\ell(w_n)}^{2n+1})\sim S^*\otimes 1^{\otimes (n-k)}\otimes 1^{\otimes \sum_{m=k}^{n-1}\ell(w_m)}. $$ 
	Let $F_j^{(w,n)}$ be the polynomial obtained by replacing the action $\pi_w^n$ in polynomial $P_j^{(w,n)}$ given in Lemma $\ref{4.2a}$ with $\eta_{w}^n$. Define $$h_j^{(w,n)}:=((h_0^{(w,n)})^*)^t F_j^{(w,n)}\, \mbox{ and } \, h_{j*}^{(w,n)}:=(h_j^{(w,n)})^{*},$$ where $t=$ degree of $F_j^{(w,n)}$. 
	So we get $$h_j^{(w,n)}\sim 1^{\otimes n-m+1}\otimes P_j^{(w,n)}$$ on the subspace generated by basis elements having $e_0$ at the $(n-m+1+\sum_{i=m}^{n-1}\ell(w_i)+\sigma (s))$th place for $s<j$. Using this and  Lemma $\ref{4.2a}$, the claim of part $(1)$ follows. 
	By Lemma $\ref{4.2a}$, it follows that  
	$$(h_j^{(w,n)})^z\sim 1^{\otimes n-m+1\sum_{i=1}^{n-1}\ell(w_i)}\otimes 1^{\otimes\sigma(j)-1}\otimes (\sqrt{1-q^{2CN}}S^*)^z\otimes 1^{\ell(w_n)-\sigma(j)}$$ on the subspace generated by basis element having $e_0$ at the $(n-m+1+\sum_{i=m}^{n-1}\ell(w_i)+\sigma (s))$th place for $s<j$, where $z\in\mathbb{N}$.
	The claim now follows from the observations made just before the lemma.
\end{proof}

\begin{lmma}\label{5.6}
	Let $w\in W_n^{R_m}$ and $\eta_{w}^n$ be the action of  $\mathcal{O}(SO_q(2n+1)/K_q^{R_m,P(R_m^c)})$ on $c_{00}(\mathbb{Z})^{n-m+1}\otimes c_{00}(\mathbb{N})^{\ell(w)} $.  For each $m\leq i<n$, 
	there exist endomorphisms $T_1^i,T_2^i,\cdots ,T_{2i+1}^i$ and $Q_1^i,Q_2^i,\cdots ,Q_{2i+1}^i$ in $\mathcal{O}(SO_q(2n+1)/K_q^{R_m,P(R_m^c)})$ such that for all $1\leq j \leq 2i+1$, we have 
	\begin{align*}
		&T_j^i(u\otimes e_0\otimes e_0\otimes\cdots\otimes e_0)=C\,(\eta_{w_mw_{m+1}\cdots w_{i}}^i(v_j^{2i+1})u)\otimes e_0\otimes \cdots \otimes e_0,\\
		&Q_j^i(u\otimes e_0\otimes e_0\otimes\cdots\otimes e_0)=C\,(\eta_{w_mw_{m+1}\cdots w_{i}}^i(v_j^{2i+1})^*u)\otimes e_0\otimes \cdots \otimes e_0,
	\end{align*}
	where $u\in c_{00}(\mathbb{Z})^{\otimes n-m+1}\otimes c_{00}(\mathbb{N})^{\otimes \sum_{s=m}^i{\ell(w_s)}}$  and $C$ is a nonzero constant.
\end{lmma}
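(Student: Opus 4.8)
The plan is to carry over, essentially verbatim, the rank-lowering mechanism of Lemma \ref{lowerrank}, adapting it in two ways: the ambient action is now the homogeneous-space action $\eta_w^n$ rather than $\pi_w^n$, and we must exhibit the adjoint operators $Q_j^i$ alongside the $T_j^i$. Since $w\in W_n^{R_m}$ its parts $w_1,\dots,w_{m-1}$ are empty, so $w=w_m\cdots w_n$ and Proposition \ref{3.3} furnishes the chain $D_{w_m\cdots w_i}\hookrightarrow\cdots\hookrightarrow D_{w_m\cdots w_n}$; write $\lambda_i^n$ for the resulting embedding map. Because $m\le i<n$ gives $n+i+1\in\{n+m,\dots,2n+1\}$, the generator $v_{\lambda_i^n(j)}^{n+i+1}$ belongs to $\zeta_m$, hence to $\mathcal{O}(SO_q(2n+1)/K_q^{R_m,P(R_m^c)})$. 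I would then set
\[
T_j^i=\eta_w^n\bigl(v_{\lambda_i^n(j)}^{\,n+i+1}\bigr),\qquad
Q_j^i=(T_j^i)^{*}=\eta_w^n\bigl((v_{\lambda_i^n(j)}^{\,n+i+1})^{*}\bigr),
\]
both of which lie in the $*$-subalgebra since $\eta_w^n$ is a $*$-homomorphism and $\zeta_m$ is $*$-closed.

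For the identity concerning $T_j^i$, I would expand through the coproduct, grouping the parts as $\eta_w^n=\eta_{w_m\cdots w_i}^n\ast\pi_{w_{i+1}\cdots w_n}^n$, and apply the resulting operator to $u\otimes e_0\otimes\cdots\otimes e_0$. Two facts then collapse the sum over the intermediate index $k$: first, $\pi_{w_m\cdots w_i}^n(v_k^{n+i+1})=0$ whenever $k\notin\{n-i+1,\dots,n+i+1\}$, which confines $k$ to the rank-$i$ window; second, conditions (i)--(ii) of Definition \ref{definition of diagram embedding} for $\lambda_i^n$ annihilate every term on $e_0\otimes\cdots\otimes e_0$ except the one with $k$ matching $j$. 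Using $(\ref{equation})$ to peel the diagonal factor $\eta_e^n(v_{n+i+1}^{n+i+1})$ onto the $c_{00}(\bbz)$ legs, the single surviving summand is, up to a nonzero constant $C$, the rank-$i$ homogeneous-space operator $\eta_{w_m\cdots w_i}^i(v_j^{2i+1})$ acting on $u$ with the trailing $e_0$'s untouched, which is the asserted formula.

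The statement for $Q_j^i$ would be obtained by running the same coproduct computation for $(v_{\lambda_i^n(j)}^{n+i+1})^{*}$, now invoking the $*$-conditions (iii)--(iv) of Definition \ref{definition of diagram embedding} in place of (i)--(ii); these were built into the notion of embeddability precisely so that the adjoint of a rank-lowering operator is again rank-lowering, here producing $\eta_{w_m\cdots w_i}^i((v_j^{2i+1})^{*})$ on the $i$-th block. Should a multi-step embedding be needed, I would compose the single-step maps exactly as in Proposition \ref{Diagram embedding}; here one application of $\lambda_i^n$ suffices.

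I expect the genuine obstacle to be bookkeeping rather than anything conceptual: one must verify that the diagonal shift $\eta_e^n(v_{n+i+1}^{n+i+1})$ on the $c_{00}(\bbz)$ legs and the rank-$n$ operator $\eta_{w_m\cdots w_i}^n(v_j^{n+i+1})$ restricted to the window $\{n-i+1,\dots,n+i+1\}$ really reassemble into the genuine rank-$i$ operator $\eta_{w_m\cdots w_i}^i(v_j^{2i+1})$ after the nodes of the window are relabelled $1,\dots,2i+1$ and the extra torus legs are accounted for. Controlling this identification simultaneously for an operator and its adjoint --- so that the constants, the shift directions, and the weight factors $q^{\cdot N}$ all match --- is the delicate part, and it is exactly where the four-fold conditions of Definition \ref{definition of diagram embedding} do the work.
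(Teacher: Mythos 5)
Your proposal is correct and follows essentially the same route as the paper: by equation (\ref{equation}) your choice $T_j^i=\eta_w^n(v_{\lambda_i^n(j)}^{\,n+i+1})$ coincides with the paper's $\eta_{e}^n(v_{M_n^i}^{M_n^i})\otimes\pi_w^n(v_{\lambda_{i}^n(j)}^{M_n^i})$ (recall $M_n^i=n+i+1$), and $Q_j^i=(T_j^i)^*$ matches as well. The verification — collapsing the coproduct sum via the support of $\pi_{w_m\cdots w_i}^n(v_k^{n+i+1})$ and the embedding conditions, peeling off the $c_{00}(\mathbb{Z})$ leg, and invoking the rank-lowering identity of Lemma \ref{lowerrank} — is exactly the paper's argument.
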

\begin{proof}
	From Proposition \ref{3.3}, we have 
	$$D_{w_1w_2\cdots w_i} \hookrightarrow D_{w_1w_2\cdots w_n}.$$  Let $\lambda_{i}^n$ be the associated embedding map. 	For $1\leq j\leq 2i+1$, define
	\begin{IEEEeqnarray*}{rCl}
		T_j^i&=&\eta_{e}^n(v_{M_n^i}^{M_n^i}) \otimes \pi_w^n(v^{M_n^i}_{\lambda_{i}^n(j)}),\\
		Q_j^i&=&\eta_{e}^n(v_{M_n^i}^{M_n^i})^* \otimes \pi_w^n(v^{M_n^i}_{\lambda_{i}^n(j)})^*.
	\end{IEEEeqnarray*}
	Note that,
	\[
	\eta_{e}^n(v_{M_n^i}^{M_n^i})=\eta_{e}^i(v_{2i+1}^{2i+1}).
	\]
	Take $u\in c_{00}(\mathbb{Z})^{\otimes n-m+1}\otimes c_{00}(\mathbb{N})^{\otimes \sum_{s=m}^i{\ell(w_s)}}$ of the form  $u=u_0\otimes u_{<i} \otimes u_i$, where $u_0 \in c_{00}(\mathbb{Z})^{\otimes n-m+1}, u_{<i} \in  c_{00}(\mathbb{N})^{\otimes \sum_{s=m}^{i-1}{\ell(w_s)}}$ and $u_i \in c_{00}(\mathbb{N})^{\otimes \ell(w_i)}$. By applying Lemma \ref{lowerrank},  equation (\ref{equation}), and the diagram representation of $\pi_w^n$, we have 
	\begin{IEEEeqnarray*}{rCl}
		T_j^i(u\otimes e_0\otimes e_0\otimes\cdots\otimes e_0)&=& \eta_{e}^n(v_{M_n^i}^{M_n^i}) \otimes \pi_w^n(v^{M_n^i}_{\lambda_{i}^n(j)})(u\otimes e_0\otimes e_0\otimes\cdots\otimes e_0)\\
		&=& \eta_{e}^n(v_{M_n^i}^{M_n^i}) \otimes \pi_w^n(v^{M_n^i}_{\lambda_{i}^n(j)})(u_0\otimes u_{<i}\otimes u_i\otimes  e_0\otimes e_0\otimes\cdots\otimes e_0)\\
		&=& \eta_{e}^n(v_{M_n^i}^{M_n^i})(u_0) \otimes \pi_w^n(v^{M_n^i}_{\lambda_{i}^n(j)})( u_{<i} \otimes u_i\otimes e_0\otimes e_0\otimes\cdots\otimes e_0)\\
		&=& C\eta_{e}^i(v_{2i+1}^{2i+1})(u_0) \otimes u_{<i} \otimes  \pi_{w_i}^i(v^{2i+1}_{\lambda_{i}^n(j)})(u_i) \otimes e_0\otimes e_0\otimes\cdots\otimes e_0)\\
		&=&C(\eta_{w_mw_{m+1}\cdots w_{i}}^i(v_j^{2i+1})u)\otimes e_0\otimes \cdots \otimes e_0.\\
	\end{IEEEeqnarray*}
	Proceeding in a similar way, one gets the other part of the claim.  
\end{proof}

\blmma \label{4.6}
Let $w \in W_n^{R_m}$.
Then for each $m \leq i \leq n$, there exist a permutation $\sigma$ of $\{1,2,\cdots,\ell(w_i)\}$ and polynomials $h_{0}^{(w,i)},h_{1}^{(w,i)},\cdots,h_{\ell(w_n)}^{(w,i)} $ and $h_{1*}^{(w,i)},h_{2*}^{(w,i)},\cdots,h_{\ell(w_n)*}^{(w,i)}$ with variables $\eta_w^n(v^k_l)$
with $v^k_l\in\zeta_m$ such that
\begin{enumerate}[(i)]
	\item 
	\begin{IEEEeqnarray*}{lCl}
		\overleftarrow{\prod_{i=m}^{n}}(h_{1*}^{(w,i)})^{p^i_{\sigma_{i}(1)}}
		(h_{1}^{(w,i)})^{r^i_{\sigma_{i}(1)}}\cdots(h_{\ell(w_i)^*}^{(w,i)})^{p^i_{\sigma_{i}
				(\ell(w_i))}}(h_{\ell(w_i)}^{(w,i)})^{r^i_{\sigma_{i}(\ell(w_i))}}
		\overleftarrow{\prod_{i=m}^n}(h_{0}^{(w,i)})^{r^i_{0}}(e_0\otimes\cdots\otimes e_0)\\
		=C\,e_{r_0^n}\otimes e_{r_0^{n-1}}\otimes\cdots\otimes e_{r_0^k}\otimes e_{r_{r_1^k-p_1^k}}\otimes   \cdots \otimes  e_{{r_{\ell(w_k)}^k-p_{\ell(w_k)}^k}}\otimes\cdots\otimes  e_{{r_1^n-p_1^n}}\otimes \cdots\otimes 
		e_{{r_{\ell(w_n)}^n-p_{\ell(w_n)}^n}},
	\end{IEEEeqnarray*}
	where $r_0^j,r_j^i,p_j^i\in\mathbb{N}$ and $r_j^i \geq p_j^i$ for $1\leq j\leq \ell(w_i)$ and $m\leq i\leq n.$
	\item The following endomorphisms
	$$\overleftarrow{\prod_{i=m}^{n}}(h_{1*}^{(w,i)})^{p^i_{\sigma_{i}(1)}}
	(h_{1}^{(w,i)})^{r^i_{\sigma_{i}(1)}}\cdots
	(h_{\ell(w_i^*)}^{(w,i)})^{p^i_{\sigma_{i}(\ell(w_i))}}\\(h_{\ell(w_i)}^{(w,i)})^{r^i_{\sigma_{i}
			(\ell(w_i))}}\overleftarrow{\prod_{i=m}^n}(h_{0}^{(w,i)})^{r^i_{0}},$$ 
	
	where $r_0^j,r_j^i,p_j^i\in\mathbb{N}$, $r_j^i \geq p_j^i$ for $1\leq j\leq \ell(w_i)$, and $m\leq i\leq n$ are linearly independent.
\end{enumerate}
\elmma
\begin{proof}
	Let $h_j^{(w,n)}$, $h_{j\ast}^{(w,n)}$,  $1\leq j \leq \ell(w_n)$, and $\sigma_n$ be as given in Lemma \ref{5.5}.  To define polynomials $h_j^{(w,i)}$ and $h_{j\ast}^{(w,i)}$ for $0\leq j\leq \ell(w_i)$ and permutation $\sigma_i$  and $k\leq i<n$, we first view $w_1w_2\cdots w_i$ as an element of the Weyl group $W_i$ of $\mathfrak{so}(2i+1)$.  By Lemma $\ref{5.5}$,  one can define polynomial $h_j^{(w_1w_2\cdots w_i,i)} $ and $h_{j\ast}^{(w_1w_2\cdots w_i,i)}$ and permutation $\sigma_i$.  Now, we   replace the variable $\eta_{w_1w_2\cdots w_i}^i(v_l^{2i+1})$ with $T_l^i$ and $\eta_{w_1w_2\cdots w_i}^i(v_l^{2i+1})^*$ with $Q_l^i$ for $1\leq l \leq 2i+1$ in the polynomial $h_j^{(w_1w_2\cdots w_i,i)} $ and
	$h_{j\ast}^{(w_1w_2\cdots w_i,i)}$, and  obtain the  polynomials  $h_j^{(w,n)}$ and $h_{j\ast}^{(w,n)}$, respectively, for all $0\leq j\leq \ell(w_i)$.  Applying  Lemma $\ref{5.5}$ and Lemma $\ref{5.6}$, we get both parts of the claim. 
\end{proof}
\begin{thm}\label{4.7}
	Let $w_n^{R_m}$ be the longest element of  $W^{R_m}_n$. Then we have the following:
	$$\text{GKdim}(\mathcal{O}(\mbox{Spin}_q(2n+1)/K_q^{R_m,P(R^c_m)}))= 2\ell(w_n^{R_m})+n-m+1=\text{dim}(\mbox{Spin}(2n+1)/K^{R_m,P(R^c_m)}).$$
\end{thm}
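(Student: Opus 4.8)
The plan is to sandwich the Gelfand--Kirillov dimension between the upper bound already recorded in Proposition~\ref{4.5} and a matching lower bound extracted from the linearly independent family of Lemma~\ref{4.6}, and then to verify the last equality of the statement by a short root--system computation. The upper bound is immediate: specializing Proposition~\ref{4.5} to $R=R_m$ and observing that $P(R_m^c)$ is freely generated by the fundamental weights $w_i$ with $i\in R_m^c$, so that $\#P(R_m^c)=|R_m^c|=n-m+1$, yields
$$\text{GKdim}\big(\mathcal{O}(\mbox{Spin}_q(2n+1)/K_q^{R_m,P(R^c_m)})\big)\leq 2\ell(w_n^{R_m})+(n-m+1).$$

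For the lower bound I would take $w=w_n^{R_m}$ and use the endomorphisms of Lemma~\ref{4.6}(i), indexed by tuples $(r_0^i,r_j^i,p_j^i)$ constrained only by $r_j^i\geq p_j^i\geq 0$ and $r_0^i\geq 0$. Each such endomorphism is $\eta_{w_n^{R_m}}^{n,\mbox{Spin}}$ applied to an element of the quotient algebra which, since every $h_{\bullet}^{(w,i)}$ is a noncommutative polynomial of degree at most some fixed $A$ in the generators of $\zeta_m$ and their adjoints, is a linear combination of words of length at most $A\big(\sum_i r_0^i+\sum_{i,j}(r_j^i+p_j^i)\big)$ in the finite self-adjoint set $\Gamma:=\zeta_m\cup\zeta_m^{*}\cup\{1\}$. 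By Lemma~\ref{4.6}(ii) these endomorphisms are linearly independent, and since $\eta_{w_n^{R_m}}^{n,\mbox{Spin}}$ is faithful by Theorem~\ref{4.3}, the corresponding algebra elements are linearly independent in $\mathcal{O}(\mbox{Spin}_q(2n+1)/K_q^{R_m,P(R^c_m)})$. Restricting to tuples with total exponent at most $r$ places all of them in $\Gamma^{Ar}$, and the number of admissible tuples is the number of lattice points of coordinate-sum $\leq r$ in a rational cone whose dimension is $(n-m+1)+2\ell(w_n^{R_m})$ (one free coordinate per $r_0^i$ and two per pair $(r_j^i,p_j^i)$). Hence $\dim(\Gamma^{Ar})\geq C\,r^{\,2\ell(w_n^{R_m})+(n-m+1)}$, so
$$\text{GKdim}\big(\mathcal{O}(\mbox{Spin}_q(2n+1)/K_q^{R_m,P(R^c_m)})\big)\geq\limsup_{r\to\infty}\frac{\ln\!\big(C\,r^{\,2\ell(w_n^{R_m})+(n-m+1)}\big)}{\ln(Ar)}=2\ell(w_n^{R_m})+(n-m+1),$$
which meets the upper bound.

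For the identification with the manifold dimension I would work entirely at the level of root data. The Levi of $K^{R_m,P(R^c_m)}$ has root system $\Phi_{R_m}$ of type $B_{m-1}$ (the last $m-1$ simple roots of $B_n$, terminating at the short root), so $\ell(w_n^{R_m})=|\Phi^+(B_n)|-|\Phi^+(B_{m-1})|=n^2-(m-1)^2$. Moreover the torus directions $\mathbb{T}^n_{P(R_m^c)}$ cut out by $P(R_m^c)$ are spanned by the coroots $H_i$ with $i\in R_m$ and hence already lie in the Levi, so $\dim K^{R_m,P(R^c_m)}=\dim SO(2m-1)=(m-1)(2m-1)$. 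Therefore
$$\dim\big(\mbox{Spin}(2n+1)/K^{R_m,P(R^c_m)}\big)=n(2n+1)-(m-1)(2m-1)=2\big(n^2-(m-1)^2\big)+(n-m+1),$$
and the right-hand side is exactly $2\ell(w_n^{R_m})+(n-m+1)$, completing the three-way equality.

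I expect the main obstacle to be the bookkeeping in the lower bound rather than any new idea: one must check that the adjoints $h_{j*}^{(w,i)}$ occurring in Lemma~\ref{4.6} really are words in the \emph{finite} self-adjoint set $\Gamma$ lying inside the quotient algebra, and that the word-length bound is uniform across the indexing tuples so that the whole family sits in a single $\Gamma^{Ar}$. Once linear independence from Lemma~\ref{4.6}(ii) is transported to the algebra through the faithfulness of Theorem~\ref{4.3}, the lattice-point estimate is routine and the manifold-dimension identity is a finite arithmetic verification with no deformation-theoretic content.
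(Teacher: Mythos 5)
Your proposal is correct and follows essentially the same route as the paper: the upper bound from Proposition \ref{4.5}, the lower bound by counting the linearly independent endomorphisms of Lemma \ref{4.6}(ii) inside $F^{Ar}$ for the faithful realization $\eta_{w_n^{R_m}}^{n,\mbox{Spin}}$, and a root-theoretic check of the manifold dimension. The only (immaterial) divergence is in that last step, where you compute $\ell(w_n^{R_m})=n^2-(m-1)^2$ and $\dim K^{R_m,P(R_m^c)}=(m-1)(2m-1)$ explicitly, whereas the paper argues structurally via $\dim=2\ell(\text{longest word})+\mathrm{rank}$ together with the length-additive factorization $w_{n,\Pi}=w_{n,R_m}w_n^{R_m}$; both yield the same arithmetic.
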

\begin{proof}
	To prove the first equality, it suffices to show that
	$$\text{GKdim}(\mathcal{O}(\mbox{Spin}_q(2n+1)/K_q^{R_m,P(R^c_m)}))\geq 2\ell(w_n^{R_m})+n-m+1,$$
	thanks to Proposition \ref{4.5}.
	We will work with the image of $\mathcal{O}(\mbox{Spin}_q(2n+1)/K_q^{R_m,P(R^c_m)})$ under the faithful  representation  $\eta_{w_n^{R_m}}^n$.
	Take a generating set $F=\eta_{w_n^{R_m}}^n(\zeta_m)\cup\eta_{w_n^{R_m}}^n({(\zeta_m})^*)$.
	Define $$A:=\max\{ \text{degree of }h_j^{(w_n^{R_m},i)}:0\leq j\leq \ell({(w_n^{R_m})}_i),m\leq i\leq n\}.$$ From part $(2)$ of Lemma $\ref{4.6}$, we get
	\begin{align*}
		GKdim(\mathcal{O}(\mbox{Spin}_q(2n+1)/K_q^{R_m,P(R^c_m)}))\geq & \limsup \frac{\text{ln} ~dim(F^{Ar})}{\text{ln} Ar}\\\geq &\limsup \frac{\text{ln} \frac{{r+2\ell(w_n^{R_m})+n-m \choose r}}{2}}{\text{ln} Ar}=2\ell(w_n^{R_m})+n-m+1.
	\end{align*}
	Now,  consider the complexified Lie algebra ${\mathfrak{k}^\mathbb{C}_{R}}$  of $K^{R_m,P(R^c_m)}$. Let $w_{n,R_m}$ be the longest word in the Weyl group $W_{n,R_m}$ of ${\mathfrak{k}^\mathbb{C}_{R}}$. Using the decomposition of ${\mathfrak{k}^\mathbb{C}_{R}}$  into roots spaces and Cartan subalgebra (see page $226$ of \cite{NesTus-2012aa}), and  that  twice the length of the longest word of a Weyl groups is the same as the number of roots
	and the rank of a maximal
	torus is the same as the dimension of a cartan subalgebra of  ${\mathfrak{k}^\mathbb{C}_{R}}$, we have
	$$\text{dim}~(K^{R_m,P(R^c_m)})=\text{dim}({\mathfrak{k}^\mathbb{C}_{R}})=2\ell(w_{n,R_m})+m-1.$$
	Let $w_{n,\Pi}$ be the longest element of $W_n$ . By applying the same   argument as above, we get
	$$dim (\mbox{Spin}(2n+1))=dim (\mathfrak{so}(2n+1))=2\ell(w_{n,\Pi})+n.$$
	Since $w_{n,\Pi}= w_{n,R_{m}} w_n^{R_m}$ with 
	$\ell(w_{n,\Pi})=\ell(w_{n,R_m})+\ell(w_n^{R_m})$, it follows that 
	\begin{align*}
		\text{dim}~(\mbox{Spin}(2n+1)/K^{R_m,P(R^c_m)})=&\text{dim} (\mbox{Spin}(2n+1))-\text{dim}(K^{R_m,P(R^c_m)})\\
		=&(2\ell(w_{n,\Pi})+n)-(2\ell(w_{n,{R_m}})+m-1)=2\ell(w_n^{R_m})+n-m+1.
	\end{align*}
\end{proof}
\begin{crlre} One has 
	$$GKdim(\mathcal{O}(\mbox{Spin}_q(2n+1)/\mbox{Spin}_q(2n-2m+1)))
	=dim(\mbox{Spin}(2n+1)/\mbox{Spin}(2n-2m+1)).$$
\end{crlre}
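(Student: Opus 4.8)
The plan is to read this corollary off Theorem~\ref{4.7} after a single Lie-theoretic identification of the subgroup, so the real content lies in matching the Dynkin-subdiagram data rather than in any new estimate. First I would fix the parameter so that the semisimple subgroup has the correct rank: put $\tilde m=n-m+1$, so that the Section-4 subset becomes $R_{\tilde m}=\{\alpha_{m+1},\alpha_{m+2},\dots,\alpha_n\}$ (with $R_{1}=\emptyset$ when $m=n$), which lies in the admissible range $1\le\tilde m\le n$. The subdiagram of the $B_n$ Dynkin diagram supported on $R_{\tilde m}$ contains the short-root endpoint $\alpha_n$ and is therefore of type $B_{n-m}$. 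Hence the connected subgroup $K^{R_{\tilde m}}$ of $\mbox{Spin}(2n+1)$ whose complexified Lie algebra $\mathfrak{k}^{R_{\tilde m}}$ is generated by $E_i,F_i,H_i$, $i\in R_{\tilde m}$, is the standardly embedded $\mbox{Spin}(2(n-m)+1)=\mbox{Spin}(2n-2m+1)$.

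Next I would verify that the torus decoration contributes nothing extra. Since $R_{\tilde m}^c=\{\alpha_1,\dots,\alpha_m\}$, the annihilator $\mathbb{T}^n_{P(R_{\tilde m}^c)}$ is the rank-$(n-m)$ subtorus killing $\omega_1,\dots,\omega_m$. Using $\langle\omega_i,\alpha_j^\vee\rangle=\delta_{ij}$, the coroot torus of $K^{R_{\tilde m}}$, spanned by $\alpha_j^\vee$ with $j\in R_{\tilde m}$, annihilates every $\omega_i$ with $i\le m$ and hence sits inside $\mathbb{T}^n_{P(R_{\tilde m}^c)}$; both tori have rank $n-m$, so they coincide and already lie in $K^{R_{\tilde m}}$. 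This is exactly the statement, used inside the proof of Theorem~\ref{4.7}, that $\dim K^{R_{\tilde m},P(R_{\tilde m}^c)}=\dim\mathfrak{k}^{R_{\tilde m}}$. Consequently $K^{R_{\tilde m},P(R_{\tilde m}^c)}=\mbox{Spin}(2n-2m+1)$ with no surviving torus factor, and at the quantized level the invariance conditions defining $\mathcal{O}(\mbox{Spin}_q(2n+1)/K_q^{R_{\tilde m},P(R_{\tilde m}^c)})$ reduce to invariance under the standard embedding $\mbox{Spin}_q(2n-2m+1)\hookrightarrow\mbox{Spin}_q(2n+1)$, identifying the two quotient algebras.

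With these identifications in place, Theorem~\ref{4.7} applied to $R_{\tilde m}$ gives
\[
\text{GKdim}\big(\mathcal{O}(\mbox{Spin}_q(2n+1)/K_q^{R_{\tilde m},P(R_{\tilde m}^c)})\big)=2\ell(w_n^{R_{\tilde m}})+(n-\tilde m+1)=\dim\big(\mbox{Spin}(2n+1)/K^{R_{\tilde m},P(R_{\tilde m}^c)}\big),
\]
and substituting $n-\tilde m+1=m$ together with $K^{R_{\tilde m},P(R_{\tilde m}^c)}=\mbox{Spin}(2n-2m+1)$ turns both ends into the claimed equality; as a numerical check, $\ell(w_n^{R_{\tilde m}})=n^2-(n-m)^2=2nm-m^2$ gives $2\ell(w_n^{R_{\tilde m}})+m=m(4n-2m+1)=\dim\big(\mbox{Spin}(2n+1)/\mbox{Spin}(2n-2m+1)\big)$. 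I expect the only delicate point to be the middle step, namely confirming that $\mathbb{T}^n_{P(R_{\tilde m}^c)}$ is absorbed into $K^{R_{\tilde m}}$ so that the quotient carries no extra abelian directions; once that is settled the corollary requires nothing beyond Theorem~\ref{4.7}.
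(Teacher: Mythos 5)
Your proposal is correct and follows the same route as the paper: the paper's proof is exactly the one-line reduction to Theorem~\ref{4.7} applied with parameter $n-m+1$, together with the identification $\mathcal{O}(\mbox{Spin}_q(2n+1)/K_q^{R_{n-m+1},P(R^c_{n-m+1})})=\mathcal{O}(\mbox{Spin}_q(2n+1)/\mbox{Spin}_q(2n-2m+1))$, which the paper asserts without proof. Your verification that the subdiagram on $R_{n-m+1}$ is of type $B_{n-m}$ and that $\mathbb{T}^n_{P(R_{n-m+1}^c)}$ is absorbed into $K^{R_{n-m+1}}$ simply supplies the details behind that asserted identification, and your numerical check is consistent.
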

\begin{proof}
	The claim follows immediately from  Theorem $\ref{4.7}$ and the fact that  $$\mathcal{O}((\mbox{Spin}_q(2n+1)/\\K_q^{R_{n-m+1},P(R^c_{n-m+1})})
	=\mathcal{O}(\mbox{Spin}_q(2n+1)/\mbox{Spin}_q(2n-2m+1)).$$
\end{proof}
\begin{thm}\label{4.8}	Let $w_n^{R_m}$ be the longest element of  $W^{R_m}_n$. Then we have the following:
	$$\text{GKdim}(\mathcal{O}((SO_q(2n+1)/\tilde{K}_q^{R_m,P(R^c_m)}))= 2\ell(w_n^{R_m})+n-m+1=\text{dim}(SO(2n+1)/\tilde{K}^{R_m,P(R^c_m)}).$$
\end{thm}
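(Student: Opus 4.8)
The plan is to deduce Theorem \ref{4.8} from the work already carried out for the Spin case in Theorem \ref{4.7}, by observing that every operator entering the two-sided GKdim estimate for the Spin-quotient in fact already lives in the smaller $SO$-quotient algebra. The structural fact I would exploit is that $\mathcal{O}(SO_q(2n+1)/\tilde{K}_q^{R_m,P(R^c_m)})$ is a $\ast$-subalgebra of $\mathcal{O}(\mbox{Spin}_q(2n+1)/K_q^{R_m,P(R^c_m)})$, together with the inclusion $\zeta_m\subset \mathcal{O}(SO_q(2n+1)/\tilde{K}_q^{R_m,P(R^c_m)})$ recorded just after the definition of $\zeta_m$. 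As in Theorem \ref{GKdim1} it suffices to establish the first equality and then reuse the Lie-theoretic computation for the second.

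For the upper bound I would invoke the Corollary to Theorem \ref{4.3}, which provides the faithful homomorphism $\eta_{w_n^{R_m}}^n$ from $\mathcal{O}(SO_q(2n+1)/\tilde{K}_q^{R_m,P(R^c_m)})$ into $\mathcal{P}(C(\mathbb{T}^n/\mathbb{T}^n_{P(R^c_m)}))\otimes \mathcal{P}(\scrt)^{\otimes \ell(w_n^{R_m})}$. Since this map is faithful, the GKdim of the algebra equals that of its image, which is a subalgebra of the tensor product; using $\mathbb{T}^n/\mathbb{T}^n_{P(R^c_m)}\cong\mathbb{T}^{n-m+1}$, the values $\text{GKdim}(\mathcal{P}(C(\mathbb{T})))=1$ and $\text{GKdim}(\mathcal{P}(\scrt))=2$, and subadditivity of GKdim over tensor products, I obtain
$$\text{GKdim}(\mathcal{O}(SO_q(2n+1)/\tilde{K}_q^{R_m,P(R^c_m)}))\le (n-m+1)+2\ell(w_n^{R_m}).$$
This is verbatim the argument of Proposition \ref{4.5} with the Spin-quotient replaced by the $SO$-quotient; nothing new is needed since the faithful realization lands in the same target algebra.

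For the lower bound the key point is that the finite set $F=\eta_{w_n^{R_m}}^n(\zeta_m)\cup\eta_{w_n^{R_m}}^n((\zeta_m)^*)$ used in Theorem \ref{4.7} consists entirely of images of elements of $\zeta_m$ and their adjoints, all of which lie in the $\ast$-subalgebra $\mathcal{O}(SO_q(2n+1)/\tilde{K}_q^{R_m,P(R^c_m)})$. Hence $F$ is a finite subset containing $1$ of (the image of) the $SO$-quotient, and may be used to estimate its GKdim. All the polynomials $h_j^{(w,i)}$ and $h_{j*}^{(w,i)}$ of Lemma \ref{4.6} are by construction polynomials in the variables $\eta_w^n(v^k_l)$ with $v^k_l\in\zeta_m$, so they too lie in the $SO$-quotient, and part (ii) of Lemma \ref{4.6} supplies the same linearly independent family inside $F^{Ar}$. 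This gives
$$\dim(F^{Ar})\ge \tfrac{1}{2}\binom{r+2\ell(w_n^{R_m})+n-m}{r},$$
whence $\text{GKdim}(\mathcal{O}(SO_q(2n+1)/\tilde{K}_q^{R_m,P(R^c_m)}))\ge 2\ell(w_n^{R_m})+n-m+1$, which together with the upper bound yields the first equality. The second equality is identical to the dimension computation ending the proof of Theorem \ref{4.7}: passing to a quotient group does not change Lie algebras, so $SO(2n+1)$ and $\mbox{Spin}(2n+1)$ share $\mathfrak{so}(2n+1)$ and $\tilde{K}^{R_m,P(R^c_m)}$ and $K^{R_m,P(R^c_m)}$ share $\mathfrak{k}^{R_m}$, giving $\dim(SO(2n+1)/\tilde{K}^{R_m,P(R^c_m)})=(2\ell(w_{n,\Pi})+n)-(2\ell(w_{n,R_m})+m-1)=2\ell(w_n^{R_m})+n-m+1$.

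The whole argument is essentially bookkeeping, and the one step I would be most careful about — the only place a genuine obstacle could hide — is verifying that every operator appearing in the lower bound (the generators in $\zeta_m$, their adjoints, and the polynomials built from them in Lemmas \ref{5.5}, \ref{5.6} and \ref{4.6}) truly belongs to the $SO$-quotient rather than merely to the larger Spin-quotient. Because $\zeta_m$ is explicitly contained in $\mathcal{O}(SO_q(2n+1)/\tilde{K}_q^{R_m,P(R^c_m)})$ and this subalgebra is $\ast$-closed, the containment holds for $\zeta_m$, its adjoints, and all products thereof, so no real difficulty arises and Theorem \ref{4.8} follows.
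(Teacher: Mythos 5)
Your proposal is correct and follows essentially the same route as the paper: the upper bound by rerunning Proposition \ref{4.5} through the faithful realization $\eta^n_{w_n^{R_m}}$ of the $SO$-quotient, the lower bound by noting that $\zeta_m$, its adjoints, and hence the polynomials of Lemma \ref{4.6} already lie in $\mathcal{O}(SO_q(2n+1)/\tilde{K}_q^{R_m,P(R^c_m)})$, and the dimension count via the shared Lie algebras. (You even state the two inequalities with the correct orientations, whereas the paper's write-up has them typographically swapped.)
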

\prf Observe  that   Proposition \ref{4.5} holds in this case as well.  The same proof works. 
This gives us the following inequality.
$$\text{GKdim}(\mathcal{O}(SO_q(2n+1)/\tilde{K}_q^{R_m,P(R^c_m)}))\geq  2\ell(w_n^{R_m})+n-m+1.$$
Next, note that the the polynomials mentioned in Lemma \ref{4.6} involves variables of matrix entries of first and last  $m$ rows $(\!(v_{l}^{k})\!)$ which are in the quotient algebra $\mathcal{O}(SO_q(2n+1)/\tilde{K}_q^{R_m,P(R^c_m)})$. Proceeding in the same way as in Theorem \ref{4.7}, we get the equality. 
\[
\text{GKdim}(\mathcal{O}(SO_q(2n+1)/\tilde{K}_q^{R_m,P(R^c_m)}))\leq  2\ell(w_n^{R_m})+n-m+1.
\]
This proves our first result. The rest immediately follows from the facts that  
\[
\mbox{dim}(\mbox{Spin}(2n+1))= \mbox{dim}(SO(2n+1)), 
\]
and  $$\mbox{dim}(K_q^{R_{n-m+1},P(R^c_{n-m+1})})
=\mbox{dim}(\tilde{K}_q^{R_{n-m+1},P(R^c_{n-m+1})}).$$
\qed
\begin{crlre} 
	$$GKdim(\mathcal{O}(SO_q(2n+1)/SO_q(2n-2m+1)))=dim(SO(2n+1)/SO(2n-2m+1)).$$
\end{crlre}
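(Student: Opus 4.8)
The plan is to deduce this as an immediate consequence of Theorem \ref{4.8}, in exact parallel with the corollary proved just above for the $\mbox{Spin}$ case. All the analytic content of the statement is already packaged in Theorem \ref{4.8}; what remains is purely a matter of matching the homogeneous space $SO_q(2n+1)/SO_q(2n-2m+1)$ with one of the quotients $SO_q(2n+1)/\tilde{K}_q^{R,P(R^c)}$ treated there, and then reading off both sides.

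First I would choose $R = R_{n-m+1}$. By the recollection preceding Theorem \ref{4.7}, one has $R_{n-m+1} = \{\alpha_{m+1}, \cdots, \alpha_n\}$, and deleting the first $m$ nodes from the type $B_n$ Dynkin diagram leaves the connected type $B_{n-m}$ subdiagram spanned by $\alpha_{m+1}, \ldots, \alpha_n$ (the short root $\alpha_n$ being retained). Hence the connected subgroup $\tilde{K}^{R_{n-m+1}} \subset SO(2n+1)$ whose complexified Lie algebra is generated by the $E_i, F_i, H_i$ with $\alpha_i \in R_{n-m+1}$ is precisely $SO(2n-2m+1)$. Consequently
$$\mathcal{O}(SO_q(2n+1)/\tilde{K}_q^{R_{n-m+1},P(R^c_{n-m+1})}) = \mathcal{O}(SO_q(2n+1)/SO_q(2n-2m+1)),$$
and at the classical level $\tilde{K}^{R_{n-m+1},P(R^c_{n-m+1})} = SO(2n-2m+1)$ as real manifolds.

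Then I would apply Theorem \ref{4.8} with $m$ replaced by $n-m+1$: its left-hand side becomes the Gelfand--Kirillov dimension of $\mathcal{O}(SO_q(2n+1)/SO_q(2n-2m+1))$, and its chain of equalities identifies this number with $\dim(SO(2n+1)/SO(2n-2m+1))$ through the manifold-dimension expression $2\ell(w_n^{R_{n-m+1}}) + m$. Substituting the identification above into both sides yields the asserted equality verbatim, and no further estimates are needed. The only point requiring genuine argument rather than a mere citation is the index bookkeeping $m \leftrightarrow n-m+1$ together with the Dynkin-diagram computation that pins the Levi-type subgroup down as the smaller odd orthogonal group $SO(2n-2m+1)$; once that is in place, everything else is a direct invocation of Theorem \ref{4.8}.
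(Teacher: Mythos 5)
Your proposal is correct and follows exactly the paper's route: identify $\mathcal{O}(SO_q(2n+1)/\tilde{K}_q^{R_{n-m+1},P(R^c_{n-m+1})})$ with $\mathcal{O}(SO_q(2n+1)/SO_q(2n-2m+1))$ and then invoke Theorem \ref{4.8}. The paper states this identification without the Dynkin-diagram justification you supply, so your write-up is just a slightly more detailed version of the same two-line argument.
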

\prf  Note that  $$\mathcal{O}(SO_q(2n+1)/\tilde{K}_q^{R_{n-m+1},P(R^c_{n-m+1})})=
\mathcal{O}(SO_q(2n+1)/SO_q(2n-2m+1)).$$
The claim now follows from Theorem \ref{4.8}.
\qed
\brmrk  A few remarks are in order.
\begin{enumerate}
	\item If one looks at the proof carefully, it relies mainly upon two pillars: firstly, explicit construction of certain polynomials of generators for the last part of the Weyl word, and secondly, the diagram embedding properties, which allows us to use step-wise induction. In (\cite{ChaSau-2018aa}, \cite{ChaSau-2019aa}), and in this article, the first step is somewhat natural. However, the second step is done on a case-by-case basis. It would be very interesting if we find a natural way of proving the diagram embedding properties. This would certainly extend the results of this paper to the $q$-deformation of a more general class of semisimple Lie groups.
	\item Though in this paper, we have chosen  a subset $R_m$ of the set of simple roots $\Pi$ of a particular form, one can take any subset $R\subset \Pi$ and prove the same results.  The same proof works.  
	\item Here we need  diagram embeddability  only for the irreducible representations of $\mathcal{O}(SO_q(2n+1))$ and its quotients space $\mathcal{O}((SO_q(2n+1)/\tilde{K}_q^{R_{m},P(R^c_{m})})$ to get our results.  However, this property holds for  $\mathcal{O}(\mbox{Spin}_q(2n+1))$ and its quotients space $\mathcal{O}((\mbox{Spin}_q(2n+1)/K_q^{R_{m},P(R^c_{m})})$.
\end{enumerate}

\ermrk
\noindent\begin{footnotesize}\textbf{Acknowledgement}:
	Bipul Saurabh  acknowledges support from  NBHM grant 02011/19/2021/NBHM(R.P)/R\&D II/8758.
\end{footnotesize}

\vspace{1cm}
\noindent{\sc Akshay Bhuva} (\texttt{akshayb@iitgn.ac.in, akshaybhuva98@gmail.com})\\
         {\footnotesize Indian Institute Of Technology, Gandhinagar, Palaj, Gandhinagar, 382355, INDIA}\\

\noindent{\sc Bipul Saurabh} (\texttt{bipul.saurabh@iitgn.ac.in})\\
         {\footnotesize Indian Institute Of Technology, Gandhinagar, Palaj, Gandhinagar, 382355, INDIA}

\end{document}